\theoremstyle{plain}
\newtheorem{thm}{Theorem}[section]
\newtheorem{cor}[thm]{Corollary}
\newtheorem{prop}[thm]{Proposition}
\theoremstyle{definition}
\newtheorem{ex}[thm]{Example}
\def\R{\mathbb{R}}
\def\C{\mathbb{C}}
\def\N{\mathbb{N}}
\def\Z{\mathbb{Z}}
\def\E{\mathbb{E}}
\def\I{\infty}
\def\cF{\mathcal{F}}
\newcommand{\be}{\begin{equation}}
\newcommand{\ee}{\end{equation}}
\newcommand{\bea}{\begin{eqnarray}}
\newcommand{\eea}{\end{eqnarray}}
\newcommand{\beann}{\begin{eqnarray*}}
\newcommand{\eeann}{\end{eqnarray*}}
\newcommand{\benn}{\begin{equation*}}
\newcommand{\eenn}{\end{equation*}}
\def\ra{\rightarrow}
\def\I{\infty}
\begin{document}
 
\title{Nonlocal Generalized Models of Predator-Prey Systems}
\author{Christian Kuehn and Thilo Gross\thanks{Max Planck Institute for Physics of Complex Systems, 01187 Dresden, Germany}}

\maketitle

\begin{abstract}
The method of generalized modeling has been applied successfully in many different contexts, particularly in ecology and systems biology. It can be used to analyze the stability and bifurcations of steady-state solutions. Although many dynamical systems in mathematical biology exhibit steady-state behaviour one also wants to understand nonlocal dynamics beyond equilibrium points. In this paper we analyze predator-prey dynamical systems and extend the method of generalized models to periodic solutions. First, we adapt the equilibrium generalized modeling approach and compute the unique Floquet multiplier of the periodic solution which depends upon so-called generalized elasticity and scale functions. We prove that these functions also have to satisfy a flow on parameter (or moduli) space. Then we use Fourier analysis to provide computable conditions for stability and the moduli space flow. The final stability analysis reduces to two discrete convolutions which can be interpreted to understand when the predator-prey system is stable and what factors enhance or prohibit stable oscillatory behaviour. Finally, we provide a sampling algorithm for parameter space based on nonlinear optimization and the Fast Fourier Transform which enables us to gain a statistical understanding of the stability properties of periodic predator-prey dynamics. 
\end{abstract}

{\bf Keywords:} Generalized models, periodic orbits, predator-prey system, Floquet theory, moduli space flow, Fourier series, discrete convolution, parameter sampling, optimization, correlation.\\


\section{Introduction}
\label{sec:intro}

Predator-prey systems have been a cornerstone in mathematical biology for many decades  \cite{Berryman}. Standard textbooks on dynamical systems, differential equations and ecology provide a plethora of models that aim at capturing the interaction between a predator population $Y$ and a prey population $X$. Examples for modeling the situation by ordinary differential equations (ODEs) are \cite{Braun,BrauerCastillo-Chavez}
\benn
\text{(LV)}\left\{\begin{array}{lcl}
X'&=&p_1X-p_2XY, \\
Y'&=&p_3XY-p_4Y, \\
\end{array}\right. \qquad 
\text{(RM)}\left\{\begin{array}{lcl}
X'&=&k_1X-k_2X^2-k_3\frac{XY}{k_4+X}, \\
Y'&=&k_5\frac{XY}{k_4+X}-k_6Y, \\
\end{array}\right.
\eenn
where $p_i$, $k_i$ are parameters and (LV) are the Lotka-Volterra equations and (RM) is the Rosenzweig-MacArthur model \cite{Kot}. These two models are the most common examples of a large class of different models of the form
\be
\label{eq:gm_local_intro}
\begin{array}{lcl}
X'&=& S(X)-G(X,Y)\\
Y'&=& \alpha G(X,Y)-M(Y)\\
\end{array}
\ee 
where $\alpha>0$ is a parameter describing biomass conversion efficiency and the functions $S$, $G$ and $M$ represent prey growth, predation, and predator mortality, respectively. Because the parameter $\alpha$ can always be removed by scaling the variable $Y$ and re-labelling the functions we will always assume $\alpha=1$ from now on. 

Generalized models \cite{GrossFeudel,KuehnSiegmundGross} directly work with the formulation \eqref{eq:gm_local_intro} without specifying functional forms for $S$, $G$ and $M$. Previous works on generalized models \cite{Gross1,GrossBaurmannFeudelBlasius,GehrmannDrossel,ZumsandeStiefsSiegmundGross} focused on analyzing the dynamics close to stationary states. Beyond the structure of the equations \eqref{eq:gm_local_intro} this analysis requires only the assumption that steady states exist in the class of models under consideration. The central idea of generalized modeling is to parametrize all possible Jacobians that can be encountered in steady states in the class of systems under consideration. Using a specific renormalization procedure, one can define parameters that are easily interpretable (and often also directly measurable) in the context of an application. Applications of generalized models to ecology can be found in \cite{GrossFeudel2,GrossEbenhoehFeudel,GrossEbenhoehFeudel1,GrossFeudel,GrossFeudel1,BaurmannGrossFeudel,StiefsGrossSteuerFeudel,GrossRudolfLevinDieckmann,StiefsvanVoornKooiFeudelGross,YeakelStiefsNovakGross}. Let us emphasize that we do not claim that stability results from generalized models have not been observed before in some specific models; in fact, the literature on stability of planar-predator prey systems is very large. For instance, questions of local and global stability have been investigated in various predator-prey systems \cite{Goh,Hastings2,Wolkowicz,MoghadasAlexander,LiouCheng}.\\

In the present paper we go beyond the previous analysis and study nonstationary dynamics in the context of generalized modeling. We extend the theory of generalized models to arbitrary periodic solutions in the context of the predator-prey system \eqref{eq:gm_local_intro}. We show that this mathematical extension of generalized models yields several new phenomena in comparison to generalized models for steady states. For example, we can define time-periodic generalized parameters of the predator-prey model and we prove that these functions obey a system of ODEs (a flow on moduli space). Using Floquet theory \cite{Chicone} and Fourier analysis \cite{Katznelson} we derive analytical conditions for the solvability of the moduli space flow and obtain an analytical stability formula. In this context, a main result is that the stability formula for periodic solutions only depends on two constants that can be calculated via a discrete convolution. Using this formula we can identify parameters and conditions that enhance the stability of predator-prey cycles. Furthermore, we develop an algorithmic approach to sample the function space of parameters by solving an auxiliary optimization problem, which will be instrumental for future applications to larger systems.\\

The paper is structured as follows: In Section \ref{sec:background}, we recall the necessary tools from steady-state generalized models, Floquet theory, and Fourier analysis. In Section \ref{sec:gm_nonlocal}, we calculate the generalized vector field for non-equilibrium solutions. In Section \ref{sec:mod_flow}, we derive the flow on moduli space. In Section \ref{sec:specific}, we compute the generalized scale and elasticity functions for several specific functional forms to gain a better understanding how generalized and specific models link up. In Section \ref{sec:Fourier_decomp}, we use tools from Fourier analysis to derive algebraic conditions from the moduli space flow. In Section \ref{sec:stab1}, we provide an analytical stability analysis of periodic orbits in generalized predator-prey models. Using this result we can identify which situations increase or decrease stability and interpret the results in an ecological context. In Section \ref{sec:sampling}, we develop a sampling technique for generalized scale and elasticity functions that is based on solving an auxiliary optimization problem and the Fast Fourier Transform. We also use this sampling-based approach to improve our understanding of stabilizing and destabilizing factors of the predator-prey system. In Section \ref{sec:extensions}, we conclude with a brief summary and outline the large range of applications and theoretical challenges that can be found in non-equilibrium generalized models.

\section{Background}
\label{sec:background}

In this section we introduce essential tools and techniques that will be used throughout this work. Further, we use the opportunity to fix the notation. Below, we denote a general ordinary differential equation ODE by
\be
\label{eq:ODE}
\frac{dZ}{dt}=Z'=F(Z), \qquad \text{for $Z\in\R^N$}
\ee 
and assume always that $F$ is sufficiently smooth. In the following we are going to recall the necessary tools from steady state generalized models, Floquet theory and Fourier analysis.

\subsection{Generalized Models}
\label{sec:gm_local}

Let us start by reviewing generalized modeling \cite{GrossFeudel} for ODEs with equilibrium points. A detailed mathematical approach to generalized models can be found in \cite{KuehnSiegmundGross}. For the present discussion we restrict ourselves to review generalized models in the context of a planar predator-prey system \cite{GrossFeudel1}. Such systems describe the interaction of a population of prey X and a population of predators Y. The prey population grows at rate $S(X)$, predation occurs at rate $G(X,Y)$ and natural mortality of the predator at rate $M(Y)$. Denoting the prey density as X and predator density as Y we capture the dynamics by 
\be
\label{eq:gm_local}
\begin{array}{lcl}
X'&=& S(X)-G(X,Y),\\
Y'&=& G(X,Y)-M(Y),\\
\end{array}
\ee 
where $S$, $M\in C^r(\R^+,\R^+)$ and $G\in C^r(\R^+\times \R^+,\R^+)$ are sufficiently smooth functions. Generalized modeling assumes that \eqref{eq:gm_local} admits an equilibrium point $(X,Y)=(X^*,Y^*)\in\R^+\times \R^+$.\\

We normalize the equilibrium defining new coordinates
\be
\label{eq:gm_local_normalize}
x:=\frac{X}{X^*}\qquad \text{and}\qquad y:=\frac{Y}{Y^*}.
\ee
This transformation moves the equilibrium to $(x,y)=(1,1)$. The next step is to normalize the rate functions
\be
\label{eq:gm_local_rates}
s(x):=\frac{S(X^*x)}{S(X^*)},\qquad g(x,y):=\frac{G(X^*x,Y^*,y)}{G(X^*,Y^*)}, \qquad m(y):=\frac{M(Y^*y)}{M(Y^*)}. 
\ee
A direct substitution of \eqref{eq:gm_local_normalize}-\eqref{eq:gm_local_rates} into \eqref{eq:gm_local} gives 
\be
\label{eq:gm_local1}
\begin{array}{lcl}
x'&=& \frac{S(X^*)}{X^*}s(x)-\frac{G(X^*,Y^*)}{X^*}g(x,y),\\
y'&=& \frac{G(X^*,Y^*)}{Y^*}g(x,y)-\frac{M(Y^*)}{Y^*}m(y),\\
\end{array}
\ee 
where the prefactors of the form $S(X^*)/X^*$, $G(X^*,Y^*)/X^*$, etc. represent normalized fluxes in the steady state and are also called scale parameters
\be
\label{eq:gm_local_scale_ps}
\beta_s:=\frac{S(X^*)}{X^*},\quad \beta_1:=\frac{G(X^*,Y^*)}{X^*},\quad \beta_2:=\frac{G(X^*,Y^*)}{Y^*},\quad \beta_m:=\frac{M(Y^*)}{Y^*}.
\ee
Since $(x,y)=(1,1)$ is an equilibrium point we know that the following holds:
\be
\label{eq:gm_local_eq_cond}
\begin{array}{lclcl}
0&=&\frac{S(X^*)}{X^*}s(1)-\frac{G(X^*,Y^*)}{X^*}g(1,1)&=&\beta_s-\beta_1,\\
0&=&\frac{G(X^*,Y^*)}{Y^*}g(1,1)-\frac{M(Y^*)}{Y^*}m(1)&=&\beta_2-\beta_m.\\
\end{array}
\ee
Therefore \eqref{eq:gm_local1} can be re-written as
\be
\label{eq:gm_local2}
\begin{array}{lcl}
x'&=& \beta_1(s(x)-g(x,y)),\\
y'&=& \beta_2(g(x,y)-m(y)).\\
\end{array}
\ee 
The Jacobian at the equilibrium $(x,y)=(1,1)$ is then given by
\bea
\label{eq:local_Jac}
J(1,1)&=&\left(
\begin{array}{cc}
\beta_1~\partial_x [s(x)- g(x,y)]|_{(x,y)=(1,1)} & -\beta_1~\partial_y [g(x,y)]|_{(x,y)=(1,1)}\\ 
\beta_2~\partial_x [g(x,y)]|_{(x,y)=(1,1)} & \beta_2~\partial_y[g(x,y)- m(y)]|_{(x,y)={1,1}}\\ 
\end{array}
\right)\\
&=:&
\left(
\begin{array}{cc}
\beta_1[s_x-g_x] & -\beta_1g_y\\ 
\beta_2g_x & \beta_2[g_y-m_y]\\ 
\end{array}
\right)
\eea
where $\partial_x$, $\partial_y$ denote partial derivatives and we refer to the constants 
\be
\label{eq:gm_local_elasticities}
\begin{array}{lcl}
s_x=\partial_x(s(x))|_{x=1},&\quad &g_x=\partial_x(g(x,y))|_{(x,y)=(1,1)},\\
g_y=\partial_y(g(x,y))|_{(x,y)=(1,1)}, &\quad& m_y=\partial_y(m(y))|_{y=1},\\ 
\end{array}
\ee
as elasticities. The scale parameters and elasticities are also referred to as generalized parameters.\\

In the following we will use the insight that every power law function corresponds to an elasticity that is identical to the exponent of the power law. For example, if we assume that the mortality $M(Y)$ is a linear function $M(Y)=KY$ then we find 
\benn
m_y=\partial_y\left(\frac{M(Y^*y)}{M(Y^*)}\right)|_{y=1}=\partial_y\left(\frac{KY^*y}{KY^*}\right)|_{y=1}=1.
\eenn
Hence we can relate the growth properties of the unspecified functions forms to the elasticities.

The stability of the equilibrium $(x,y)=(1,1)$ can be inferred from the eigenvalues of $J(1,1)$ and hence only depends on the generalized parameters. This admits a bifurcation analysis of all steady state models of the form \eqref{eq:gm_local} in generalized parameter space. Despite the large class of models that one treats simultaneously it is often easy to interpret scale parameters and elasticities in applications \cite{GrossFeudel}. Thereby a generalized model enables us to draw conclusions about a whole class of differential equations, for further examples see \cite{GrossRudolfLevinDieckmann,StiefsvanVoornKooiFeudelGross,SteuerGrossSelbigBlasius,ReznikSegre,GehrmannDrossel}.

We note that generalized modeling can also be applied to equilibria for delay equations \cite{HoefenerSethiaGross,KuehnSiegmundGross}, spatially homogeneous states for partial differential equations \cite{BaurmannGrossFeudel} and to stochastic differential equations \cite{KuehnSiegmundGross}.

\subsection{Floquet Theory}
\label{sec:Floquet}

For analyzing the stability of periodic solutions in GM we resort to the framework offered by Floquet Theory. Suppose \eqref{eq:ODE} has a period orbit $\gamma(t)=\gamma(t+T)$ with minimal period $T$. Let $\Sigma$ denote a suitable $(N-1)$-dimensional transversal section to $\Gamma$ and consider the associated Poincar\'{e} map $P:\Sigma\ra \Sigma$. 
This map has a fixed point $X_\gamma\subset \Sigma$ associated to the periodic orbit $\gamma$ i.e. $P(X_\gamma)=X_\gamma$. Recall \cite{Chicone2,Kuznetsov} that the stability of $\gamma$ is determined by the $N-1$ eigenvalues (or characteristic/Floquet multipliers) $\lambda_1, \ldots, \lambda_{N-1}$ of the matrix $DP(X_\gamma)$. If $|\lambda_j|<1$ for all $j\in\{1,\ldots,N-1\}$ then the periodic orbit is stable, if there exists $\lambda_j$ such that $|\lambda_j|>1$ then the orbit is unstable and eigenvalues with $|\lambda_j|=1$ signal bifurcations under parameter variation. We can study the stability of $\gamma$ by considering the non-autonomous linear variational equation
\be
\label{eq:var}
v'=DF(\gamma(t))v=:A(t)v
\ee
where $A(t)$ is periodic. An $N\times N$ matrix $M(t)$ that satisfies
\be
\label{eq:var1}
M'=A(t)M\qquad \text{ with $M(0)=\text{Id}$} 
\ee
is called the fundamental matrix solution of $\eqref{eq:var}$. The constant matrix $M(T)$ is called the monodromy (or circuit) matrix. It has eigenvalues
\benn
1, \lambda_1, \lambda_2, \ldots, \lambda_{N-1}
\eenn 
where the trivial eigenvalue $1$ is associated to the direction tangent to the periodic orbit that links the variational equation to the Poincar\'{e} map $P$. Furthermore, the Liouville formula
\be
\label{eq:Liouville}
\lambda_1\lambda_2\cdots\lambda_{N-1}=\det M(T)=\exp\left(\int_0^T Tr(A(t))dt\right)
\ee 
holds. Floquet's theorem states that there exists a $T$-periodic coordinate change $C(t)$ and a constant matrix $R$ such that 
\benn
M(t)=C(t)e^{tR}.
\eenn
Since $M(0)=\text{Id}$ it follows that $C(0)=C(T)=\text{Id}$ and we find that the monodromy matrix can be expressed as 
\benn
M(T)=e^{RT}.
\eenn
An elegant explicit formula for the Floquet multiplier from \eqref{eq:Liouville} is only available for $N=2$. In general the computation of Floquet multiplier thus requires numerical approaches, which typically start with computing the periodic solution with a suitable boundary value method such as collocation or finite differences \cite{Kuznetsov,Doedel_AUTO2007}. The variational equation \eqref{eq:var1} is solved on suitable sub-intervals of the periodic orbit discretization as an initial value problem to obtain $M(T)$. The eigenvalues of $M(T)$ are then obtained yielding the Floquet multipliers. Although, in certain circumstances, such as large multipliers, the computation can be numerically problematic \cite{FairgrieveJepson,Lust}. 

Let us point out that Floquet theory has not been widely applied in the context of ecology \cite{Klausmeier} although it is a standard tool in the mathematical theory of dynamical systems \cite{Chicone2}. Klausmeier \cite{Klausmeier} suggests that ``Floquet theory [is] a useful tool for studying the effects of temporal variability on ecological system''. In the context of our approach, Floquet theory is not only a tool for a particular model but we will also show that it nicely extends to generalized models.

\subsection{Fourier Series}
\label{sec:Fourier}

Since we work with periodic solutions to ODEs and also other time-dependent periodic functions we briefly recall basic facts about Fourier series to fix normalization constants and notation. Assume that $f:\R\ra \R$ is $T$-periodic so that we can identify the domain of $f$ as the circle $\R/(T\Z)\cong S^1$. We can formally write the complex Fourier series $\cF[f]$ of $f$ as follows:
\be
\label{eq:cseries}
\cF[f](t)=\sum_{k=-\I}^\I \hat{f}(k) \exp\left(\frac{2\pi ikt}{T}\right)
\ee
where the Fourier coefficients $\hat{f}(k)$ are  
\benn
\hat{f}(k)=\frac1T \int_0^T f(s)\exp\left(-\frac{2\pi iks}{T}\right)ds.
\eenn
Observe that $\hat{\bar{f}}(k)=\overline{\hat{f}(-k)}$, where the overbar denotes complex conjugation. Further, $\hat{f}(0)=1/T\int_0^T f(t)dt$ is the time average of the periodic function. The convergence question $\cF[f](t)\ra f(t)$ is extremely intricate depending on the properties of $f$ \cite{Zygmund,Katznelson}. In the following, all functions we are going to approximate by Fourier series will be in $C^r(S^1,\R)$ for some sufficiently large $r$ or even $r=\I$. In this case, uniform convergence is immediate. A very important practical result in this context is to control the Fourier coefficients.

\begin{thm}[see \cite{Katznelson}]
\label{thm:Fc_decay}
If $f\in C^r(S^1,\R)$ then $|\hat{f}(k)|=\mathcal{O}(k^{-r})$ as $|k|\ra \I$. 
\end{thm}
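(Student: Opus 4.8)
The plan is to reduce the decay estimate to an iterated integration by parts in the defining integral for the Fourier coefficients, exploiting the periodicity of $f$ and its derivatives to annihilate all boundary terms. First I would abbreviate the frequency by writing $\omega_k:=2\pi k/T$, so that
\benn
\hat{f}(k)=\frac1T\int_0^T f(s)e^{-i\omega_k s}\,ds.
\eenn
A single integration by parts, taking $u=f(s)$ and $dv=e^{-i\omega_k s}\,ds$, produces a boundary contribution proportional to $f(T)e^{-i\omega_k T}-f(0)$. The key observation is that this term vanishes: since $k\in\Z$ we have $e^{-i\omega_k T}=e^{-2\pi ik}=1$, and since $f$ is a function on $S^1\cong\R/(T\Z)$ we have $f(T)=f(0)$. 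What survives is the clean recursion $\hat{f}(k)=(i\omega_k)^{-1}\widehat{f'}(k)$, valid for every nonzero $k$.

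Next I would iterate this identity. Because $f\in C^r(S^1,\R)$, each of the derivatives $f,f',\dots,f^{(r-1)}$ is again continuously differentiable and periodic on the circle, so the same integration by parts applies at every stage and every boundary term cancels for the same reason. After $r$ steps this yields
\benn
\hat{f}(k)=\frac{1}{(i\omega_k)^{r}}\,\widehat{f^{(r)}}(k),\qquad k\neq 0.
\eenn
Finally I would bound the remaining coefficient trivially. Since $f^{(r)}$ is continuous on the compact circle $S^1$, it is bounded, so
\benn
|\widehat{f^{(r)}}(k)|\le\frac1T\int_0^T|f^{(r)}(s)|\,ds\le \|f^{(r)}\|_\I=:C<\I.
\eenn
Combining this with $|\omega_k|=2\pi|k|/T$ gives $|\hat{f}(k)|\le C\,(T/2\pi)^r\,|k|^{-r}$, which is exactly the claimed $\mathcal{O}(k^{-r})$ decay as $|k|\ra\I$.

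I do not expect a genuine obstacle here, as the content is entirely classical. The only points that require care are bookkeeping rather than depth: one must verify that $f$ being $C^r$ \emph{on the circle} (as opposed to merely on an interval) guarantees the periodicity of each intermediate derivative, so that the boundary terms genuinely vanish at every stage, and one must track the accumulating factors $(i\omega_k)^{-1}$ to confirm that the exponent of $|k|$ is precisely $r$ and that the constant $C$ is finite and independent of $k$.
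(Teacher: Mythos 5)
Your proof is correct: the iterated integration by parts with vanishing boundary terms (guaranteed by periodicity on $S^1$) and the trivial bound $|\widehat{f^{(r)}}(k)|\le\|f^{(r)}\|_\I$ give exactly the claimed $\mathcal{O}(|k|^{-r})$ decay, with the constant $\|f^{(r)}\|_\I\,(T/2\pi)^r$ independent of $k$. The paper itself offers no proof — it cites this as a known result from Katznelson, describing it as a smooth-function version of the Riemann--Lebesgue lemma — and your argument is precisely the standard one found in that reference (indeed, combining your reduction $\hat{f}(k)=(i\omega_k)^{-r}\widehat{f^{(r)}}(k)$ with Riemann--Lebesgue applied to $f^{(r)}$ would even upgrade the conclusion to $o(|k|^{-r})$), so there is nothing to object to.
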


Theorem \ref{thm:Fc_decay} is a version of the Riemann-Lebesgue Lemma for smooth functions and can provide an extremely rapid decay of the Fourier coefficients. This justifies (for the smooth case!) dropping higher-order terms $|k|>\kappa$ for some rather small suitable $\kappa\in \N$. The remaining sum is expected to be a good approximation to the original periodic function $f$. We write
\benn
\cF_\kappa[f](t):=\sum_{|k|\leq \kappa} \hat{f}(k)\exp\left(\frac{2\pi ikt}{T}\right) \approx f(t).
\eenn 
We remark that it can be convenient to re-write the complex Fourier series \eqref{eq:cseries} as a real Fourier series
\benn
\cF[f](t)=\frac{a_0}{2}+\sum_{k=1}^\I\left[a_k\cos\left(\frac{2\pi kt}{T}\right)+b_k\sin\left(\frac{2\pi kt}{T}\right)\right] 
\eenn
where the real Fourier coefficients relate to the complex ones by
\benn
\hat{f}(k)=\frac{1}{2}(a_k-ib_k) \qquad \text{and}\qquad \hat{f}(-k)=\frac{1}{2}(a_k+ib_k)
\eenn
for $k\in\N_0$. Another important tool in Fourier analysis we will need are convolutions. Recall that the discrete convolution of two periodic functions $f$ and $g$ is defined as 
\benn
(\hat{f}\ast\hat{g})(n)=\sum_{k=-\I}^\I \hat{f}(k)\hat{g}(n-k).
\eenn
Obviously the convolution operator `$\ast$' is associate, commutative and distributive. 

\section{Non-Equilibrium Planar Predator-Prey Systems}
\label{sec:gm_nonlocal}

We return to the planar predator-prey system \eqref{eq:gm_local} from Section \eqref{sec:gm_local} given by
\be
\label{eq:GF}
\begin{array}{lcl}
X'&=&S(X)-G(X,Y),\\
Y'&=&G(X,Y)-M(Y).\\
\end{array}
\ee
Denote the vector field of \eqref{eq:GF} by $F(X,Y)$. The vector field is only considered on the first (positive) quadrant $F:\R^+\times \R^+\ra \R^2$ as predator-prey densities are assumed to be non-negative. 

We want to analyze the class of vector fields \eqref{eq:GF} under the assumption that it admits a non-equilibrium orbit that is bounded as $|t|\ra \I$. From an ecological point of view the most interesting case are limit cycles, so-called predator-prey cycles. We assume that \eqref{eq:GF} has a periodic orbit $\gamma(t)=(\gamma_1(t),\gamma_2(t))$ with period $T$. The definition of the model implies that $\gamma_i>0$ for $i\in\{1,2\}$ and all $t$. 

In the following, we are going to slightly extend the notation employed already in Section \eqref{sec:gm_local} by re-using names for variables and generalized parameters. As in the case of equilibria one can consider a normalizing coordinate change 
\benn
x:=\frac{X}{\gamma_1} \qquad \text{and} \qquad y:=\frac{Y}{\gamma_2}
\eenn  
which maps the periodic orbit to the point $(x,y)=(1,1)=:1$. The ODEs \eqref{eq:GF} and the product rule imply
\beann
X'&=&x'\gamma_1+x \gamma_1'=x'\gamma_1+xF_1(\gamma)= S(x\gamma_1)-G(x\gamma_1,y\gamma_2),\\
Y'&=&y'\gamma_2+y \gamma_2'=y'\gamma_2+yF_2(\gamma)= G(x\gamma_1,y\gamma_2)-M(y\gamma_2).
\eeann
Therefore the new equations can be written as
\be
\label{eq:GF1}
\begin{array}{lcl}
x'&=&\frac{1}{\gamma_1}\left(S(x\gamma_1)-G(x\gamma_1,y\gamma_2)-xF_1(\gamma) \right)\\
&=&\frac{1}{\gamma_1}\left(S(x\gamma_1)-G(x\gamma_1,y\gamma_2)-x(S(\gamma_1)-G(\gamma)) \right),\\
y'&=&\frac{1}{\gamma_2}\left(G(x\gamma_1,y\gamma_2)-M(y\gamma_2)-yF_2(\gamma) \right)\\
&=&\frac{1}{\gamma_2}\left(G(x\gamma_1,y\gamma_2)-M(y\gamma_2)-y(G(\gamma)-M(\gamma_2)) \right).\\
\end{array}
\ee
In analogy to the equilibrium case we introduce normalized functions
\be
\label{eq:po_funcs}
s(x):=\frac{S(x\gamma_1)}{S(\gamma_1)}, \qquad g(x):=\frac{G(x\gamma_1,y\gamma_2)}{G(\gamma_1,\gamma_2)}, \qquad m(y):=\frac{M(y\gamma_2)}{M(\gamma_2)}. 
\ee
and define the scale parameters
\be
\label{eq:po_scales}
\begin{array}{lcl}
\beta_s(t):=\frac{S(\gamma_1(t))}{\gamma_1(t)},&\quad& \beta_1(t):=\frac{G(\gamma_1(t),\gamma_2(t))}{\gamma_1(t)},\\
\beta_2(t):=\frac{G(\gamma_1(t),\gamma_2(t))}{\gamma_2(t)},&\quad& \beta_m(t):=\frac{M(\gamma_2(t))}{\gamma_2(t)}.\\
\end{array} 
\ee
which are now time-dependent $T$-periodic scale functions. We will often suppress the time-dependence in the notation and just write, for instance, $\beta_s$ instead of $\beta_s(t)$. Using \eqref{eq:po_funcs}-\eqref{eq:po_scales} in \eqref{eq:GF1} we find
\be
\label{eq:GF2}
\begin{array}{lcl}
x'&=& \beta_s[s(x)-x] - \beta_1[g(x,y)-x],\\
y'&=& \beta_2[g(x,y)-y] - \beta_m[m(y)-y].\\
\end{array}
\ee
For applying Floquet theory we linearize \eqref{eq:GF2} around the limit cycle which yields the matrix
\benn
A(1;t)=\left(
\begin{array}{cc}
\beta_s[(\partial_xs)(1)-1] - \beta_1[(\partial_xg)(1)-1] & -(\partial_yg)(1) \\
(\partial_x g)(1) & \beta_2[(\partial_yg)(1)-1] - \beta_m[(\partial_ym)(1)-1]\\
\end{array}
 \right).
\eenn
We can re-write $A(1;t)$ in terms of the more familiar elasticities, leading to
\benn
A(1;t)=
\left(
\begin{array}{cc}
\beta_s(t)[s_x(t)-1] - \beta_1(t)[g_x(t)-1]) & -g_y(t) \\
g_x(t) & \beta_2(t)[g_y(t)-1] - \beta_m(t)[m_y(t)-1])\\
\end{array}
 \right)
\eenn
where the four time-dependent $T$-periodic elasticity functions are
\benn
s_x(t):=(\partial_xs)(1), \quad g_x(t):=(\partial_xg)(1), \quad g_y(t):=(\partial_yg)(1), \quad m_y(t):=(\partial_ym)(1).
\eenn
The periodicity and time-dependence becomes more apparent once we write out the detailed definitions, for example
\benn
s_x(t)=(\partial_xs)(1)=\partial_x\left(\frac{S(x\gamma_1)}{S(\gamma_1)}\right)|_{x=1}=\frac{\gamma_1S'(\gamma_1)}{S(\gamma_1)}.
\eenn
The previous calculations show that we can introduce replacements for the generalized parameters for equilibrium points in the context of periodic orbits. In particular, the scale parameters and elasticities become time-dependent and periodic. The term ``generalized functions'' is already used in a different context \cite{Zemanian}. Therefore, we refer to elasticity functions and scale functions directly. To analyze the stability of the periodic solution we use Floquet theory (see Section \ref{sec:Floquet}). For planar systems the stability of the periodic orbit is determined by computing the only non-trivial Floquet multiplier $\lambda$. Liouville's formula implies that
\bea
\label{eq:FM}
\lambda&=&\exp\left(\int_0^T \text{Tr}(A(1;t))dt\right)\nonumber\\
&=& \exp\left(\int_0^T \beta_s (s_x-1)-\beta_1 (g_x-1) +\beta_2(g_y-1)-\beta_m(m_y-1)dt\right).
\eea
We can thus express the Floquet multiplier as a function depending on elasticity and scale functions. This is analogous to writing the eigenvalues of the Jacobian as functions of the generalized parameters in the equilibrium case.

\section{The Moduli Space Flow}
\label{sec:mod_flow}

In analogy to the generalized exploration of local dynamics, the stability of the limit cycle can be studied by assuming plausible values for the generalized parameters (here, scale and elasticity functions). The value of generalized models lies in their ability to cover the whole range of possibilities that are plausible in the system. For an unbiased analysis it is essential that we consider only those values of parameters that are consistent with the set up of the system. For instance, in case of equilibrium generalized models we must demand that the parameter values which we assume do not preclude the existence of an equilibrium solution in the class of systems. Likewise, only those scale and elasticity functions should be considered which are mutually consistent and thus could arise in at least one example system in the class of models under consideration. To understand this problem we briefly go back to the equilibrium scenario (see Section \ref{sec:gm_local}). Suppose we just choose a set of generalized parameters
\be
\label{eq:gm_local_choice}
\beta_1=\beta_1^*, \quad \beta_2=\beta_2^*,\quad s_x=s_x^*, \quad g_x=g_x^*, \quad g_y=g_y^*,\quad m_y=m_y^*,
\ee
where we assume that all parameters are positive. One natural question is if there exist specific functions $S$, $G$ and $M$ that lead to the generalized parameters \eqref{eq:gm_local_choice}. 

\begin{prop}
\label{prop:local_specific}
Suppose \eqref{eq:gm_local_choice} are given positive generalized parameters. Then there exist functions $S$, $G$, $M$ and an equilibrium $(X,Y)=(X^*,Y^*)$ for \eqref{eq:gm_local} so that \eqref{eq:gm_local_scale_ps} and \eqref{eq:gm_local_elasticities} hold i.e. there exists a differential equation of the form \eqref{eq:gm_local} that has the given set of generalized parameters.
\end{prop}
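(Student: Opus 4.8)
The goal is to exhibit explicit functions realizing an arbitrary positive tuple of generalized parameters. The cleanest strategy is to restrict attention to functions of power-law type, exploiting the observation already recorded in the excerpt that a power law $Z \mapsto Z^p$ has elasticity exactly equal to the exponent $p$. This turns the problem of matching the four elasticities into nothing more than choosing exponents, leaving the scale parameters to be matched by free multiplicative constants.

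Concretely, I would set the equilibrium at the convenient location $(X^*,Y^*)=(1,1)$ (this is harmless since we are merely asked to exhibit \emph{some} equilibrium), and make the ansatz
\benn
S(X)=a\,X^{s_x^*}, \qquad M(Y)=b\,Y^{m_y^*}, \qquad G(X,Y)=c\,X^{g_x^*}Y^{g_y^*},
\eenn
with positive constants $a,b,c$ to be determined. A direct computation of the normalized elasticities from \eqref{eq:gm_local_elasticities} then gives $s_x=s_x^*$, $m_y=m_y^*$, $g_x=g_x^*$, $g_y=g_y^*$ automatically, because differentiating $s(x)=S(X^*x)/S(X^*)=x^{s_x^*}$ at $x=1$ returns the exponent, and similarly for the others. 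Thus the elasticity constraints are satisfied for \emph{any} choice of $a,b,c$.

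It remains to pin down $a,b,c$ so that the scale parameters \eqref{eq:gm_local_scale_ps} and the equilibrium conditions \eqref{eq:gm_local_eq_cond} hold. At $(X^*,Y^*)=(1,1)$ we have $S(X^*)=a$, $G(X^*,Y^*)=c$, $M(Y^*)=b$, so that $\beta_s=a$, $\beta_1=\beta_2=c$, $\beta_m=b$. The equilibrium conditions \eqref{eq:gm_local_eq_cond} demand $\beta_s=\beta_1$ and $\beta_2=\beta_m$, i.e. $a=c$ and $b=c$. Hence I would simply choose $a=b=c$ equal to any fixed positive number (say $1$); this simultaneously makes $(1,1)$ an equilibrium and yields a consistent set of scale parameters. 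Note that in Proposition \ref{prop:local_specific} only the \emph{elasticities} and the two predation scale parameters $\beta_1,\beta_2$ are prescribed, while $\beta_s,\beta_m$ are free to be fixed by the equilibrium relations; with $X^*=Y^*=1$ the prescribed values $\beta_1^*,\beta_2^*$ force $c$, and one checks these are mutually consistent precisely because the normalized predation flux $G(X^*,Y^*)$ is a single quantity divided by $X^*$ or $Y^*$.

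The only genuine subtlety — and the step I expect to require the most care — is reconciling $\beta_1^*$ and $\beta_2^*$, since both equal $c/X^*=c/Y^*$ at a common equilibrium and therefore cannot be prescribed independently unless one also allows $X^*\neq Y^*$. The fix is to treat $X^*,Y^*$ as additional free parameters rather than normalizing both to $1$: choosing $G(X,Y)=c\,X^{g_x^*}Y^{g_y^*}$ one gets $\beta_1^*=c (X^*)^{g_x^*-1}(Y^*)^{g_y^*}$ and $\beta_2^*=c(X^*)^{g_x^*}(Y^*)^{g_y^*-1}$, whose ratio $\beta_1^*/\beta_2^*=Y^*/X^*$ can be met by selecting $X^*/Y^*$ appropriately, after which $a,b,c$ are fixed by the remaining scale and equilibrium equations. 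I would carry out this degree-of-freedom count carefully to confirm the system of matching equations is solvable with all quantities strictly positive, which is guaranteed by the positivity hypothesis on the prescribed parameters.
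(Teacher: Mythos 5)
Your proof is correct and takes essentially the same route as the paper: a power-law ansatz for $S$, $M$, $G$ makes the elasticities equal to the exponents automatically, and the multiplicative constants together with the equilibrium location $(X^*,Y^*)$ --- whose ratio is forced by $\beta_1^*/\beta_2^* = Y^*/X^*$ --- are then used to satisfy the scale-parameter and equilibrium conditions. Your one deviation, the extra prefactor $c$ in $G$, is in fact a slight improvement: the paper's prefactor-free choice $G(X,Y)=X^{g_x^*}Y^{g_y^*}$ leaves the system for $(X^*,Y^*)$ unsolvable in the degenerate case $g_x^*+g_y^*=1$ (absent a compatibility condition on $\beta_1^*,\beta_2^*$), whereas with $c$ free one can always take $X^*=1$, $Y^*=\beta_1^*/\beta_2^*$, $c=\beta_1^*(\beta_2^*/\beta_1^*)^{g_y^*}$ and then read off the remaining constants positively.
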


\begin{proof}
Pick $M(Y)=p_1Y^{m_y^*}$ for some $p_1\in\R^+$ then $m_y=\partial_y(M(Y^*y)/M(Y^*))|_{y=1}=m_y^*$. Similarly we pick $S(X)=p_2X^{s_x^*}$ and obtain $s_x=s_x^*$. Using a slight modification of this approach we define $G(X,Y)=X^{g_x^*}Y^{g_y^*}$ and get $g_y=g_y^*$ as well as $g_y=g_y^*$. We also must have $\beta_s=\beta_1=\beta_1^*$ and $\beta_m=\beta_2=\beta_2^*$ which translates into the conditions
\beann
\beta_1\stackrel{(C1)}{=}&p_1(X^*)^{s_x-1}\stackrel{(C2)}{=}& (X^*)^{g_x-1}(Y^*)^{g_y},\\
\beta_2\stackrel{(C3)}{=}&p_2(Y^*)^{m_y-1}\stackrel{(C4)}{=}& (X^*)^{g_x}(Y^*)^{g_y-1}.\\
\eeann
We can always choose $p_1$ and $p_2$ to satisfy $(C2)$ and $(C4)$. Then we can use $X^*$ and $Y^*$ to satisfy $(C1)$ and $(C3)$. The result follows.
\end{proof}

Although there are certainly many other ways for constructing functions that are consistent with a given set of scale and elasticity parameters, already the existence of one such set of functions proves that the assumed parameter values could be encountered in the class of models under consideration. This observation is of central importance for sampling procedures, by which high dimensional generalized models are typically analyzed \cite{SteuerGrossSelbigBlasius,GrossRudolfLevinDieckmann}. 

For non-equilibrium systems the situation is different since one has to ask whether a whole set of given functions 
\benn
\beta_s(t), \quad \beta_m(t), \quad \beta_1(t), \quad \beta_2(t), \quad s_x(t), \quad g_x(t), \quad g_y(t),\quad m_y(t),
\eenn
can potentially arise from a system of the form \eqref{eq:GF}.

\begin{thm}
\label{thm:mod_flow}
Suppose we are given elasticity functions $s_x$, $m_y$, $g_x$ and $g_y$ then the scale functions have to satisfy the following set of ODEs
\be
\label{eq:mod_flow}
\begin{array}{lcl}
\beta_s'&=& \beta_s(\beta_s-\beta_1)(s_x-1),\\
\beta_m'&=& \beta_m(\beta_2-\beta_m)(m_y-1),\\
\beta_1'&=&\beta_1((\beta_s-\beta_1)g_x-(\beta_s-\beta_1)+(\beta_2-\beta_m)g_y),\\
\beta_2'&=&\beta_2((\beta_2-\beta_m)g_y-(\beta_2-\beta_m)+(\beta_s-\beta_1)g_x).\\
\end{array}
\ee
\end{thm}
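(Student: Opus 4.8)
The plan is to differentiate the definitions \eqref{eq:po_scales} of the four scale functions directly with respect to $t$, using that $\gamma=(\gamma_1,\gamma_2)$ is an honest solution of \eqref{eq:GF}, and then to re-express every resulting term through the scale and elasticity functions alone. The structural point driving the proof is that the scale and elasticity functions are not independent data: they are all assembled from the single orbit $\gamma(t)$ and the single triple $S$, $G$, $M$. The system \eqref{eq:mod_flow} is precisely the compatibility constraint forced by this common origin together with the requirement that $\gamma$ move under the vector field, so the $\beta$'s cannot be prescribed freely once the elasticities are fixed.

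First I would record the two identities obtained by dividing the orbit equations \eqref{eq:GF} by $\gamma_1$ and $\gamma_2$ and recognizing the scale functions on the right. This gives the logarithmic derivatives of the orbit,
\[
\frac{\gamma_1'}{\gamma_1}=\beta_s-\beta_1,\qquad \frac{\gamma_2'}{\gamma_2}=\beta_2-\beta_m,
\]
which will be substituted repeatedly in what follows.

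Next I would treat each scale function by logarithmic differentiation. For $\beta_s=S(\gamma_1)/\gamma_1$ the chain rule gives $(\ln\beta_s)'=\frac{S'(\gamma_1)}{S(\gamma_1)}\gamma_1'-\frac{\gamma_1'}{\gamma_1}$; recognizing $\gamma_1 S'(\gamma_1)/S(\gamma_1)=s_x$ and inserting the first identity above yields $\beta_s'=\beta_s(\beta_s-\beta_1)(s_x-1)$, and $\beta_m$ is handled identically with $M$, $\gamma_2$, $m_y$. The two remaining equations are the substantive ones: for $\beta_1=G(\gamma_1,\gamma_2)/\gamma_1$ the chain rule now produces two contributions, $(\ln\beta_1)'=\frac{\partial_X G}{G}\gamma_1'+\frac{\partial_Y G}{G}\gamma_2'-\frac{\gamma_1'}{\gamma_1}$, and after identifying the logarithmic partial derivatives as $g_x/\gamma_1$ and $g_y/\gamma_2$ and substituting both orbit identities one obtains the third line of \eqref{eq:mod_flow}. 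The function $\beta_2$ is computed the same way, the sole change being that the subtracted term is $\gamma_2'/\gamma_2$ rather than $\gamma_1'/\gamma_1$, which is exactly what breaks the symmetry between the last two equations.

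I expect the only genuine obstacle to be bookkeeping rather than anything conceptual: the predation term $G$ depends on both orbit components, so its logarithmic derivative carries two chain-rule pieces, and one must be careful to pair $g_x$ with $\gamma_1'/\gamma_1=\beta_s-\beta_1$ and $g_y$ with $\gamma_2'/\gamma_2=\beta_2-\beta_m$ without crossing them. A secondary point worth stating explicitly is that the whole argument is pointwise in $t$ and never invokes periodicity; periodicity of the scale and elasticity functions is simply inherited from $\gamma$. Hence \eqref{eq:mod_flow} is best read as a pointwise consistency condition that any admissible family of scale functions must obey once the elasticity functions $s_x$, $m_y$, $g_x$, $g_y$ have been prescribed.
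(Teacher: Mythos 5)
Your proposal is correct and takes essentially the same route as the paper: differentiate the definitions of the scale functions along the orbit, recognize the elasticities $s_x$, $m_y$, $g_x$, $g_y$ as (scaled) logarithmic derivatives of $S$, $M$, $G$, and substitute the orbit identities $\gamma_1'/\gamma_1=\beta_s-\beta_1$ and $\gamma_2'/\gamma_2=\beta_2-\beta_m$. Your use of logarithmic differentiation, with the orbit identities recorded up front, is only a cleaner bookkeeping of the paper's quotient-rule computation, not a different argument.
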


\begin{proof}
We start by deriving the equation for $\beta_s$. We know that $\beta_s=S(\gamma_1)/\gamma_1$ and direct differentiation with respect to time via the quotient and chain rules gives
\beann
\beta_s'&=&\frac{\gamma_1\gamma_1'S'(\gamma_1)-S(\gamma_1)\gamma_1'}{(\gamma_1)^2}\\
&=&\frac{\gamma_1\gamma_1'S'(\gamma_1)S(\gamma_1)}{(\gamma_1)^2S(\gamma_1)}-\frac{S(\gamma_1)\gamma_1'}{(\gamma_1)^2}
\eeann
Noting that $s_x=\gamma_1S'(\gamma_1)/S(\gamma_1)$ and using the definition of $\beta_s$ the equation transforms to 
\bea
\beta_s'&=&\frac{s_x\gamma_1'S(\gamma_1)}{(\gamma_1)^2}-\frac{S(\gamma_1)\gamma_1'}{(\gamma_1)^2} \nonumber\\
&=&\frac{s_x\gamma_1'\beta_s}{\gamma_1}-\frac{\beta_s\gamma_1'}{\gamma_1}=\beta_s(s_x-1)\frac{\gamma_1'}{\gamma_1}. \label{eq:thm1_proof}
\eea
Since $(\gamma_1,\gamma_2)$ is a trajectory of \eqref{eq:GF} we must have $\gamma_1'=S(\gamma_1)-G(\gamma_1,\gamma_2)$. This implies upon substitution into \eqref{eq:thm1_proof} that
\beann
\beta_s'&=& \beta_s(s_x-1)\frac{S(\gamma_1)-G(\gamma_1,\gamma_2)}{\gamma_1}\\
&=& \beta_s(s_x-1)(\beta_s-\beta_1)
\eeann
which is the first equation in \eqref{eq:mod_flow}. The calculation for $\beta_m'$ is similar. For $\beta_1'$ we find
\beann
\beta_1'&=& \frac{\gamma_1[G_x(\gamma_1,\gamma_2)\gamma_1'+G_y(\gamma_1,\gamma_2)\gamma_2']-\gamma_1'G(\gamma_1,\gamma_2)}{(\gamma_1)^2}\nonumber\\
&=& \frac{\gamma_1G_x(\gamma_1,\gamma_2)\gamma_1'G(\gamma_1,\gamma_2)}{G(\gamma_1,\gamma_2)(\gamma_1)^2}+\frac{\gamma_1G_y(\gamma_1,\gamma_2)\gamma_2'}{(\gamma_1)^2}\frac{G(\gamma_1,\gamma_2)\gamma_2}{G(\gamma_1,\gamma_2)\gamma_2}-\frac{\gamma_1'G(\gamma_1,\gamma_2)}{(\gamma_1)^2}\nonumber\\
&=& g_x\beta_1\frac{\gamma_1'}{\gamma_1}+g_y\beta_1\frac{\gamma_2'}{\gamma_2}-\beta_1\frac{\gamma_1'}{\gamma_1}\nonumber\\
&=& \beta_1(g_x(\beta_s-\beta_1)+g_y(\beta_2-\beta_m)-(\beta_s-\beta_1))
\eeann
The calculation for $\beta_2'$ is similar to the one for $\beta_1'$.
\end{proof}

The main conclusion is that the elasticities and scale functions which parametrize the ODE \eqref{eq:GF2} satisfy an ODE themselves. Because one often uses the terms ``parameters'' and ``moduli'' interchangeably, Theorem \ref{thm:mod_flow} implies that the time-dependent parameters of generalized models generate a flow on moduli space. The following remark describes the relevance of this viewpoint in some other research areas.\\

\textit{Remark:} The term "moduli space" is perhaps most commonly used in algebraic geometry which, broadly speaking, is the study of solutions of algebraic equations \cite{Harris,Hartshorne}. The solutions form algebraic varieties (e.g.{ }curves). Often suitable parametrized families of algebraic varieties again have the structure of an algebraic variety, where the latter object is the moduli space of parametrized families. The study of the geometry of moduli spaces has also been transported into different branches of physics such as quantum field theory \cite{Andrianopolietal}. In dynamical systems theory, a classical moduli space argument is made in the renormalization analysis of parametrized families one-dimensional maps \cite{GH}, where the renormalization transformation can be viewed as a map generating a dynamical system on moduli space. A very similar situation occurs for billiard dynamics where the so-called Teichm\"{u}ller flow on the space of lattices appears \cite{SmillieMSRI,MasurTabachnikov}.\\

We note that the positive quadrant is an invariant set for \eqref{eq:mod_flow} which means that this property lifts from the predator-prey family of vector fields to the moduli space. From Theorem \ref{thm:mod_flow} we can immediately infer a condition for the existence of a generalized model with given elasticities.

\begin{cor}
\label{cor:thm1}
Suppose $s_x$, $m_y$, $g_x$ and $g_y$ are given $T$-periodic elasticity functions with minimal period $T$. If \eqref{eq:mod_flow} has no $T$-periodic solutions then there exists no generalized model of the form \eqref{eq:GF} for the given elasticities. 
\end{cor}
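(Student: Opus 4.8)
The plan is to argue by contraposition: I will show that if a generalized model of the form \eqref{eq:GF} with the prescribed elasticities does exist, then \eqref{eq:mod_flow} necessarily possesses a $T$-periodic solution. Concretely, suppose there are smooth positive functions $S$, $G$, $M$ and a periodic orbit $\gamma(t)=(\gamma_1(t),\gamma_2(t))$ of \eqref{eq:GF} whose associated elasticity functions, computed as in Section \ref{sec:gm_nonlocal}, coincide with the given $s_x$, $m_y$, $g_x$, $g_y$. From this data I would form the four scale functions $\beta_s$, $\beta_1$, $\beta_2$, $\beta_m$ exactly as in \eqref{eq:po_scales}.

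The first step is to observe that these scale functions are $T$-periodic. This follows because each $\beta$ is a composition of the smooth positive functions $S$, $G$, $M$ with the components of $\gamma$, divided by a strictly positive component of $\gamma$; since $\gamma$ is $T$-periodic, so is every such combination. I then invoke Theorem \ref{thm:mod_flow}: the quadruple $(\beta_s,\beta_1,\beta_2,\beta_m)$ built from any trajectory of \eqref{eq:GF} satisfies the moduli space flow \eqref{eq:mod_flow} driven by the given elasticities. Hence $(\beta_s,\beta_1,\beta_2,\beta_m)$ is a $T$-periodic solution of \eqref{eq:mod_flow}, which contradicts the hypothesis that no such solution exists. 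The stated conclusion is exactly the contrapositive of this implication.

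The step requiring the most care is the matching of periods, which is precisely where the minimal-period hypothesis enters. A priori a model reproducing the given elasticities might do so along an orbit $\gamma$ of minimal period $T_0\neq T$, in which case the scale functions would only be known to be $T_0$-periodic. Since the elasticities are functions of $\gamma$, every period of $\gamma$ is automatically a period of the elasticities, so the minimal period of the elasticities divides $T_0$; the assumption that the elasticities have minimal period exactly $T$ then forces $T_0$ to be a positive integer multiple of $T$ and, in particular, rules out any reproducing orbit of period smaller than $T$. To obtain a genuinely $T$-periodic solution of \eqref{eq:mod_flow}, I would adopt the convention that a ``generalized model for the given elasticities'' carries an orbit of period $T$, so that the associated scale functions lie in the $T$-periodic class to which Theorem \ref{thm:mod_flow} applies; the alternative of a subharmonic orbit ($T_0=kT$ with $k>1$) would yield only a $kT$-periodic solution and is excluded by this convention. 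Once the period is pinned at $T$, the remainder is the immediate application of Theorem \ref{thm:mod_flow} described above.
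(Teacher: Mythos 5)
Your proposal is correct and takes essentially the same route as the paper: the corollary is presented there as an immediate consequence of Theorem \ref{thm:mod_flow}, namely the contrapositive you spell out --- any model of the form \eqref{eq:GF} realizing the given elasticities along a $T$-periodic orbit produces, via \eqref{eq:po_scales}, $T$-periodic scale functions solving \eqref{eq:mod_flow}. Your additional discussion of subharmonic orbits (minimal period $kT$, $k>1$) addresses a genuine ambiguity that the paper passes over in silence, and resolving it by fixing the orbit's period to $T$ is consistent with the paper's implicit intent.
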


Note that the existence of periodic solutions in Corollary \ref{cor:thm1} is only a necessary condition for the existence of a generalized model. We also observe that for the equilibrium case the conditions $\beta_1=\beta_s$ and $\beta_2=\beta_m$ give $\beta_s'=\beta_m'=\beta_1'=\beta_2'=0$ consistent with steady state generalized modeling. It is also interesting to ask what happens if we do not specify the elasticities. 

Taking the idea of deriving a differential equation one step further we consider 
\benn
s_x=\frac{\gamma_1S'(\gamma_1)}{S(\gamma_1)}\quad \Rightarrow s_x'=\frac{S(\gamma_1)[\gamma_1'S'(\gamma_1)+\gamma_1\gamma_1'S''(\gamma_1)]-\gamma_1S'(\gamma_1)\gamma_1'S'(\gamma_1)}{S(\gamma_1)^2}
\eenn
Applying similar substitutions as in the proof of Theorem \ref{thm:mod_flow} we obtain
\beann
s_x'&=&\frac{S(\gamma_1)[\gamma_1'S'(\gamma_1)+\gamma_1\gamma_1'S''(\gamma_1)]-\gamma_1S'(\gamma_1)\gamma_1'S'(\gamma_1)}{S(\gamma_1)^2}\\
&=&\frac{\gamma_1'}{\gamma_1}s_x+\frac{\gamma_1'S''(\gamma_1)}{\beta_s}-\frac{\gamma_1'}{\gamma_1}s_x^2\\
&=&s_x(s_x-1)(\beta_s-\beta_1)+\frac{1}{\beta_s}\gamma_1'S''(\gamma_1).
\eeann
Similar calculations can be carried out for $g_x'$, $g_y'$ and $m_y'$. These suggest that specifying a suitable scaled version of second partial derivatives of $S$, $G$ and $M$ will provide a system of eight ODEs. This procedure could be continued iteratively. It is interesting to note that closing a system of ODEs at a given order is a problem that also occurs in the context of moment closure for networks \cite{KeelingRandMorris,GrossDLimaBlasius} and for moment equations of stochastic differential equations \cite{Socha,Gardiner}. To get a better understanding of the stability of non-equilibrium generalized models and the flow on moduli space we proceed to consider a few typical specific functions $S$, $G$ and $M$ that appear in predator-prey models.

\section{Specific Functions}
\label{sec:specific}

In this section we calculate the generalized elasticity and scale functions for several well-known predator-prey models. All the model parameters $k_l$ (for $l\in \N$) we are going to use below are positive due to modeling considerations. We start with the growth of the prey $S(X)$. Typical choices are
\benn
\begin{array}{lclcl}
S(X)&=&k_1X&\quad& \text{(linear growth)},\\
S(X)&=&k_1X^p&\quad& \text{(power growth)},\\
S(X)&=&k_1X-k_2X^2&\quad& \text{(logistic growth)},\\
S(X)&=&k_1X(k_2-X)(X-k_3)&\quad& \text{(growth with strong Allee effect), $0<k_2<k_3$}.\\
\end{array} 
\eenn 
We start by looking at linear growth. We find
\benn
\beta_s=\frac{S(\gamma_1)}{\gamma_1}=k_1,\qquad s_x=\partial_x\left.\left(\frac{S(x\gamma_1)}{S(\gamma_1)}\right)\right|_{x=1}=1.
\eenn
For estimating the impact of linear growth on stability we consider the formula \eqref{eq:FM} and view it as a product of exponentials. The term involving $\beta_s$ and $s_x$ is
\be
\label{eq:stab_form_partS}
\exp\left(\int_0^T \beta_s [s_x-1] dt\right).
\ee
Therefore, a linear prey growth does not contribute to the non-trivial Floquet multiplier because $s_x=1$ and $\exp(\int_0^T 0dt)=1$. Different types of polynomial growth with a single term can be treated analogously since for $S(X)=k_1X^p$ we find
\benn
\beta_s=\frac{S(\gamma_1)}{\gamma_1}=k_1\gamma_1^{p-1},\qquad s_x=\partial_x\left.\left(\frac{S(x\gamma_1)}{S(\gamma_1)}\right)\right|_{x=1}=p.
\eenn
where the elasticity function coincides with the result for equilibrium generalized models. This allows us to write 
\benn
\beta_s[s_x-1]=k_1\gamma_1^{p-1}[p-1].
\eenn
Considering \eqref{eq:stab_form_partS} we find that increasing $p$ increases the Floquet multiplier and therefore has always a destabilizing effect, whereas decreasing $p$ has a stabilizing effect. For logistic growth we obtain 
\benn
\beta_s=\frac{S(\gamma_1)}{\gamma_1}=k_1-k_2\gamma_1,\qquad s_x=\partial_x\left.\left( \frac{S(x\gamma_1)}{S(\gamma_1)}\right)\right|_{x=1}=2 +\frac{k_1}{-k_1 + k_2 \gamma_1}.
\eenn
This implies
\be
\label{eq:S_logistic}
\beta_s[s_x-1]=(k_1-k_2\gamma_1)\left[2 +\frac{k_1}{-k_1 + k_2 \gamma_1}-1\right]=-k_2\gamma_1.
\ee
Considering \eqref{eq:stab_form_partS} again we find
\benn
\exp\left(\int_0^T \beta_s [s_x-1] dt\right)=\exp\left(\int_0^T -k_2\gamma_1 dt\right)=\exp\left(-k_2 \int_0^T \gamma_1 dt\right)
\eenn 
where the integral is positive because $k_2>0$. This means that increasing $k_2$ or increasing $\int_0^T \gamma_1 dt$ will promote stability as the Floquet multiplier will move closer to $0$. For logistic growth increasing $k_2$ corresponds to decreasing the carrying capacity $k_1/k_2$ of the population. This can be interpreted as a manifestation of the paradox of enrichment \cite{Rosenzweig,GrossEbenhoehFeudel1} which captures the observation that increasing the carrying capacity generally has a destabilizing effect on attractors observed in ecological models. Furthermore, the expression obtained for logistic grows permits discussion of the contribution of the shape of the limit cycle to stability. For $t\in[\delta_1,T-\delta_2]$, where $\delta_{1,2}>0$ are small, we find that
\be
\label{eq:small_prey}
0<\gamma_1(t)\ll1
\ee
which implies that the integral $\int_0^T\gamma_1(t)dt$ is small as well. Therefore limit cycles where the number of prey is extremely small for long times are not expected to be a basis for a stable ecosystem. For the Allee effect we find
\bea
\label{eq:S_Allee}
\beta_s[s_x-1]&=&k_1(k_2-\gamma_1)(\gamma_1-k_3)\left[1+\gamma_1\left(\frac{1}{\gamma_1-k_2}+\frac{1}{\gamma_1+k_3}\right)-1\right]\nonumber \\
&=& k_1\gamma_1(k_2+k_3-2\gamma_1).
\eea
Considering the contribution of this term to the Floquet multiplier yields
\beann
\exp\left(\int_0^T \beta_s [s_x-1] dt\right)&=&\exp\left(\int_0^T k_1\gamma_1(k_2+k_3-2\gamma_1) dt\right)\\
&=&\exp\left(\int_0^T k_1\gamma_1(k_2+k_3)dt-\int_0^T k_1 2\gamma_1^2 dt\right).
\eeann 
Increasing $k_2$ and/or $k_3$ will decrease stability. This is natural as these parameters represent the threshold to growth and the carrying capacity, providing another example for the paradox of enrichment. 

Note that the shape of the limit cycle can influence stability. In particular, the same conclusion to assumption \eqref{eq:small_prey} holds. In the case of the Allee effect the de-stabilization effect for long periods of low prey density even enters quadratically in the term $\int_0^T k_1 2\gamma_1^2 dt$. This confirms the intuitive conclusion that imposing a threshold to growth is a de-stabilizing factor for non-equilibrium systems when the prey density is small. 

We proceed to consider the mortality of the predator. A very common functional form used in a large number of models is so-called density independent (linear) mortality
\benn
M(Y)=k_1Y\qquad\Rightarrow \quad\beta_m[m_y-1]=k_1[m_y-1]=k_1[1-1]=0.
\eenn
Therefore, linear predator mortality has no effect on the stability of the periodic solution.\\

\begin{table}
\centering
\begin{tabular}{|lclcl|}
\hline
\multicolumn{5}{|c|}{Common Functional Forms} \\
\hline
$G(X,Y)$ & $=$ & $k_1 XY$ & $\quad$ & $\text{(Holling type I)}$,\\
$G(X,Y)$ & $=$ & $\frac{k_1 XY}{k_2+X}$ & $\quad$ & $\text{(Holling type II)}$,\\
$G(X,Y)$ & $=$ & $\frac{k_1 X^2Y}{k_2+X^2}$ &$\quad$& $\text{(Holling type III)}$,\\
$G(X,Y)$ &$=$& $\frac{k_1 X^2Y}{k_2+X+k_3X^2}$ & $\quad$& $\text{(Holling type IV)}$.\\
\hline
\multicolumn{5}{|c|}{$\quad$}\\
\multicolumn{5}{|c|}{Terms occurring in the Floquet multiplier $\ldots$ }\\ 
\hline
$\text{(Holling type I)} $   & $\quad$ & $\beta_1[g_x-1]$ & $=$ & $k_1\gamma_2[1-1]=0$,\\
                           &$\quad$& $\beta_2[g_y-1]$&$=$&$k_1\gamma_1[1-1]=0$,\\
$\text{(Holling type II)} $  &$\quad$& $\beta_1[g_x-1]$&$=$&$\frac{k_1\gamma_2}{k_2+\gamma_1}[\frac{k_2}{k_2+\gamma_1}-1]=-\frac{k_1\gamma_1\gamma_2}{(k_2+\gamma_1)^2}$,\\
                           &$\quad$& $\beta_2[g_y-1]$&$=$&$\frac{k_1\gamma_1}{k_2+\gamma_1}[1-1]=0$,\\
$\text{(Holling type III)}$  &$\quad$& $\beta_1[g_x-1]$&$=$&$\frac{k_1\gamma_1\gamma_2}{k_2+\gamma_1^2}[\frac{2k_2}{k_2+\gamma_1^2}-1]=\frac{k_1\gamma_1\gamma_2(k_2-\gamma_1^2)}{(k_2+\gamma_1^2)^2}$,\\
                           &$\quad$& $\beta_2[g_y-1]$&$=$&$\frac{k_1\gamma_1^2}{k_2+\gamma_1^2}[1-1]=0$,\\
$\text{(Holling type IV)}$   &$\quad$& $\beta_1[g_x-1]$&$=$&$\frac{k_1 \gamma_1 \gamma_2}{k_2+\gamma_1+k_3 \gamma_1^2}[\frac{2k_2+\gamma_1}{k_2+\gamma_1+k_3\gamma_1^2}-1]=\frac{k_1\gamma_1\gamma_2(k_2-k_3\gamma_1^2)}{(k_2+\gamma_1+k_3\gamma_1^2)^2}$,\\
                           &$\quad$& $\beta_2[g_y-1]$&$=$&$\frac{k_1\gamma_1^2}{k_2+\gamma_1+k_3\gamma_1^2}[1-1]=0$.\\
\hline
\end{tabular} 
\caption{\label{tab:tab2}Calculation of the scale and elasticity functions for the predation term $G(X,Y)$. The top panel lists four typical functional forms. In the bottom panel we calculate the terms occurring in the Floquet multiplier \eqref{eq:FM}.}
\end{table} 

The interaction term between prey and predator is usually the most complicated and debated choice for the model. Some common choices are considered in Table \ref{tab:tab2}. The observation that $\beta_2[g_y-1]$ vanishes for all functions considered in Table \ref{tab:tab2}, can be directly linked to the ecological assumption that predators hunt independently of each other. The functions that are therefore used in practice are generally linear in the density of predators and the impact of predator dependence on stability vanishes. The same assumption cannot generally be made for prey dependence of predation, leading to more complex expressions for the impact on stability. 

Therefore, we are going to make the assumptions
\be
\label{eq:gy1my1}
g_y=1\qquad \text{and}\qquad m_y=1
\ee
from now. Regarding the Floquet multiplier formula \eqref{eq:FM} the assumptions \eqref{eq:gy1my1} simplify the situation to investigating
\be
\label{eq:FM1}
\lambda=\exp\left(\int_0^T \beta_s (s_x-1)-\beta_1 (g_x-1)dt\right).
\ee
The influence of $g_x$ and $\beta_1$ on stability is not obvious since there is a non-trivial interaction with the shape of the limit cycle. The flow on moduli space given by \eqref{eq:mod_flow} simplifies to
\be
\label{eq:mod_flow1}
\begin{array}{lcl}
\beta_s'&=& \beta_s(\beta_s-\beta_1)(s_x-1),\\
\beta_1'&=&\beta_1((\beta_s-\beta_1)g_x-(\beta_s-\beta_1)+(\beta_2-\beta_m)),\\
\beta_2'&=&\beta_2(\beta_s-\beta_1)g_x.\\
\beta_m'&=& 0,\\
\end{array}
\ee
where we can view $\beta_m$ as a parameter and simply drop the last equation.\\ 

\begin{figure}[htbp]
\psfrag{a}{(a)}
\psfrag{b}{(b)}
\psfrag{c}{(c)}
\psfrag{d}{(d)}
\psfrag{e}{(e)}
\psfrag{f}{(f)}
\psfrag{t}{$t$}
\psfrag{el}{$el$}
\psfrag{beta}{$\beta$}
\psfrag{bs}{$\beta_s$}
\psfrag{b1}{$\beta_1$}
\psfrag{b2}{$\beta_2$}
\psfrag{X}{$X$}
\psfrag{Y}{$Y$}
\psfrag{ga}{$\gamma_i$}
	\centering
		\includegraphics[width=1\textwidth]{./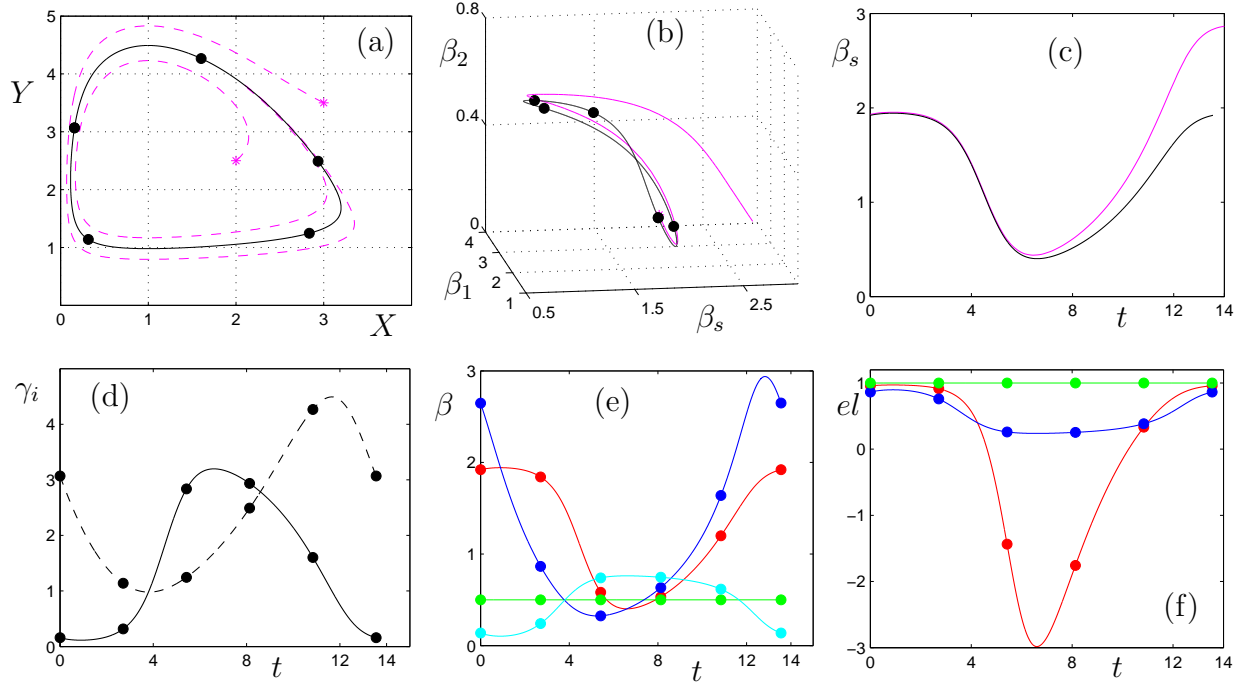}
	\caption{\label{fig:fig1}Dynamics in a specific example. (a) Stable periodic orbit $\gamma(t)$ of \eqref{eq:RM} (solid black) and two other trajectories (dashed magenta) with initial conditions marked by stars; the parameters are given in \eqref{eq:RM_para}. Five points (black dots) are shown on the limit cycle for orientation purposes which are equally space over one period. (b) Scale functions in moduli space (black) for $\gamma$ solving \eqref{eq:mod_flow1}; a trajectory (solid magenta) with slightly perturbed initial conditions is also shown where the same elasticities as for the periodic orbit were used for numerical integration. (c) Time series of $\beta_s(t)$ for part (b).(d) Time series $\gamma_1$ (solid black) and $\gamma_2$ (dashed black). (e) Scale functions associated to $\gamma$: $\beta_s(t)$ (red), $\beta_m(t)$ (green), $\beta_1(t)$ (blue) and $\beta_2(t)$ (cyan). (f) Elasticity function associated to $\gamma$: $s_x(t)$ (red), $g_x(t)$ (blue) and $g_y(t)$ (green); note that $m_y=1=g_y$. }
\end{figure}

\begin{ex}
\label{ex:ex1}
For gaining an intuitive understanding one can consider the flow on the moduli space in a specific example. The combination of logistic prey growth, Holling-type-II interaction and linear predator mortality gives us the Rosenzweig-MacArthur predator-prey model that can produce periodic solutions
\be
\label{eq:RM}
\begin{array}{lcl}
X'&=& k_1X-k_2X^2-k_3\frac{XY}{X+k_4},\\
Y'&=& k_3\frac{XY}{X+k_4}-k_5Y,\\
\end{array}
\ee
where we use the parameters 
\be
\label{eq:RM_para}
k_1=2,\qquad k_2=0.5,\qquad k_3=1,\qquad k_4=1, \qquad k_5=0.5.
\ee
Figure \ref{fig:fig1} shows that integrating a slightly perturbed initial condition trajectory does seem to diverge from the exact periodic solution in moduli space. Furthermore, even for a classical planar predator-prey system, the scale and elasticity functions are quite complicated for non-equilibrium solutions. In fact, prescribing the elasticities is much more difficult than just picking a set of fixed parameters for equilibrium generalized models.\\ 
\end{ex}

To verify the necessary condition from Corollary \ref{cor:thm1} for periodic solutions we must ask for solvability of the boundary value problem (BVP)
\be
\label{eq:mod_BVP}
\left\{\begin{array}{lcl}
\beta_s'&=& \beta_s(\beta_s-\beta_1)(s_x-1),\\
\beta_1'&=&\beta_1((\beta_s-\beta_1)g_x-(\beta_s-\beta_1)+(\beta_2-k_5)),\\
\beta_2'&=&\beta_2(\beta_s-\beta_1)g_x,\\
\beta(0)&=&\beta(T)~\text{ for $T>0$},\\ 
\end{array}\right.
\ee
where $\beta(t):=(\beta_s(t),\beta_1(t),\beta_2(t))$. It is well-known that BVPs can have one, many or no solutions \cite{AscherMattheijRussell}. Furthermore determining solvability conditions is usually not easy and even using numerical methods may be dangerous; for example, if a numerical algorithm fails to provide a solution to \eqref{eq:mod_BVP} this may just be due to the numerical problems that can arise when solving BVPs \cite{AscherMattheijRussell}.

\section{Fourier Decomposition}
\label{sec:Fourier_decomp}

The previous discussion of specific functions and the Rosenzweig-MacArthur model motivate the need for a more concrete version of the moduli space conditions \eqref{eq:mod_BVP} and of the Floquet multiplier \eqref{eq:FM1}. The natural step is to use a decomposition of the periodic functions into Fourier series; see Section \ref{sec:Fourier}. Using discrete convolution we can easily re-write the problem \eqref{eq:mod_BVP} on moduli space.

\begin{prop}
\label{prop:FC_sat}
Suppose we are given $T$-periodic elasticity functions $s_x$, $m_y$, $g_x$ and $g_y$. Then the Fourier coefficients of periodic scale functions have to satisfy the following set of algebraic equations 
\be
\label{eq:mod_flow_Fourier}
\begin{array}{lcl}
\frac{2\pi i k}{T}\hat{\beta}_s(k)&=& [\hat{\beta}_s\ast(\hat{\beta}_s-\hat{\beta}_1)\ast(\hat{s}_x-\hat{1})](k),\\
\frac{2\pi i k}{T}\hat{\beta}_m(k)&=& [\hat{\beta}_m\ast(\hat{\beta}_2-\hat{\beta}_m)\ast(\hat{g}_x-\hat{1})](k),\\
\frac{2\pi i k}{T}\hat{\beta}_1(k)&=&[\hat{\beta}_1\ast((\hat{\beta}_s-\hat{\beta}_1)\ast\hat{g}_x+(\hat{\beta}_2-\hat{\beta}_m)\ast g_y-(\hat{\beta}_s-\hat{\beta}_1))](k),\\
\frac{2\pi i k}{T}\hat{\beta}_2(k)&=&[\hat{\beta}_2\ast((\hat{\beta}_s-\hat{\beta}_1)\ast\hat{g}_x+(\hat{\beta}_2-\hat{\beta}_m)\ast g_y-(\hat{\beta}_2-\hat{\beta}_m))](k),\\
\end{array}
\ee
for all $k\in \Z$ where we have also used the notation $\hat{1}(0)=1$ and $\hat{1}(k)=0$ for $k\neq 0$ and employed the obvious definition for addition of infinite sequences.  
\end{prop}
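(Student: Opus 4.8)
The plan is to derive the algebraic system \eqref{eq:mod_flow_Fourier} directly from the moduli space flow \eqref{eq:mod_flow} of Theorem \ref{thm:mod_flow} by passing each ODE to its Fourier coefficients. The argument rests on two standard properties of Fourier series of $T$-periodic functions, recalled in Section \ref{sec:Fourier}. First, time differentiation acts on coefficients as multiplication by $2\pi i k/T$, i.e.\ $\widehat{f'}(k)=\frac{2\pi i k}{T}\hat{f}(k)$, which produces exactly the left-hand sides of \eqref{eq:mod_flow_Fourier}. Second, the coefficients of a pointwise product are the discrete convolution of the individual coefficient sequences, $\widehat{fg}(k)=(\hat{f}\ast\hat{g})(k)$, with $\ast$ as defined in Section \ref{sec:Fourier}; this is what converts the products on the right-hand sides of \eqref{eq:mod_flow} into convolutions.

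Concretely, I would treat the four scalar equations of \eqref{eq:mod_flow} one at a time. Take the $\beta_s$-equation $\beta_s'=\beta_s(\beta_s-\beta_1)(s_x-1)$. Applying the differentiation rule to the left side gives $\frac{2\pi i k}{T}\hat{\beta}_s(k)$. On the right side, linearity of the Fourier transform identifies the coefficient sequences of $\beta_s-\beta_1$ and $s_x-1$ as $\hat{\beta}_s-\hat{\beta}_1$ and $\hat{s}_x-\hat 1$, where $\hat 1$ denotes the coefficient sequence of the constant $1$, namely $\hat 1(0)=1$ and $\hat 1(k)=0$ otherwise. Since the right side is a product of three periodic functions, iterating the product rule shows its $k$-th coefficient is the triple convolution $[\hat{\beta}_s\ast(\hat{\beta}_s-\hat{\beta}_1)\ast(\hat{s}_x-\hat 1)](k)$; the associativity and commutativity of $\ast$ noted in Section \ref{sec:Fourier} make this triple convolution unambiguous. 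Equating the two sides yields the first line of \eqref{eq:mod_flow_Fourier}, and the $\beta_m$, $\beta_1$, $\beta_2$ lines follow by the identical computation, the only extra step being that the bracketed sums in the $\beta_1$- and $\beta_2$-equations are distributed across the Fourier transform by linearity before each summand is converted to a convolution.

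The step that genuinely needs justification---and hence the main obstacle---is the product-to-convolution identity $\widehat{fg}=\hat{f}\ast\hat{g}$ for the triple products at hand, since it amounts to multiplying Fourier series term by term and rearranging the resulting multiple sum into a single indexed convolution. Such rearrangements are legitimate once the series converge absolutely, which is exactly what the smoothness assumption supplies: by Theorem \ref{thm:Fc_decay} the coefficients of a $C^r(S^1,\R)$ function decay like $\mathcal{O}(k^{-r})$, so for $r$ large enough all convolution sums and their reorderings converge absolutely and the formal manipulations are valid. Finally, because a continuous $T$-periodic function is determined by its Fourier coefficients, the coefficient equations hold for all $k\in\Z$ if and only if the underlying functional ODEs in \eqref{eq:mod_flow} hold; thus the scale functions satisfying the moduli flow necessarily satisfy \eqref{eq:mod_flow_Fourier}, as claimed.
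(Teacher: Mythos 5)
Your proposal is correct and follows essentially the same route as the paper's proof: both convert the moduli space flow of Theorem \ref{thm:mod_flow} to Fourier coefficients by using the product-to-convolution identity $\widehat{fg}=\hat{f}\ast\hat{g}$ on the right-hand sides and term-by-term differentiation (justified by smoothness) on the left-hand sides. Your added remarks on absolute convergence via Theorem \ref{thm:Fc_decay} and on associativity of the triple convolution are just a more explicit writing-out of what the paper leaves implicit.
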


\begin{proof}
To complete the proof we only have to recall another basic fact from Fourier analysis. For two $T$-periodic functions we $f$, $g$ we have
\beann
f(t)g(t)&=&\sum_{k=-\I}^\I\sum_{m=-\I}^\I \hat{f}(k)\hat{g}(m)~e^{\frac{2\pi i(k+m)t}{T}}\\
&=& \sum_{n=-\I}^\I\sum_{k=-\I}^\I \hat{f}(k)\hat{g}(n-k)~e^{\frac{2\pi int}{T}}=\sum_{n=-\I}^\I (\hat{f}\ast\hat{g})(n) ~e^{\frac{2\pi int}{T}}
\eeann 
This formula for Fourier coefficients of products of functions yields the right-hand side of equation \eqref{eq:mod_flow_Fourier} as a direct consequence of Theorem \ref{thm:mod_flow}. The left-hand side of equation \eqref{eq:mod_flow_Fourier} follows from direct differentiation which is allowed since all our periodic functions are assumed to be sufficiently smooth; see Section \ref{sec:Fourier}.
\end{proof}
 
\begin{figure}[htbp]
\psfrag{b1}{$\hat{\beta}_1$}
\psfrag{b2}{$\hat{\beta}_2$}
\psfrag{bs}{$\hat{\beta}_s$}
\psfrag{bm}{$\hat{\beta}_m$}
\psfrag{gx}{$\hat{g}_x$}
\psfrag{sx}{$\hat{s}_x$}
\psfrag{g1}{$\hat{\gamma}_1$}
\psfrag{g2}{$\hat{\gamma}_2$}
\psfrag{k}{$k$}
	\centering
		\includegraphics[width=1\textwidth]{./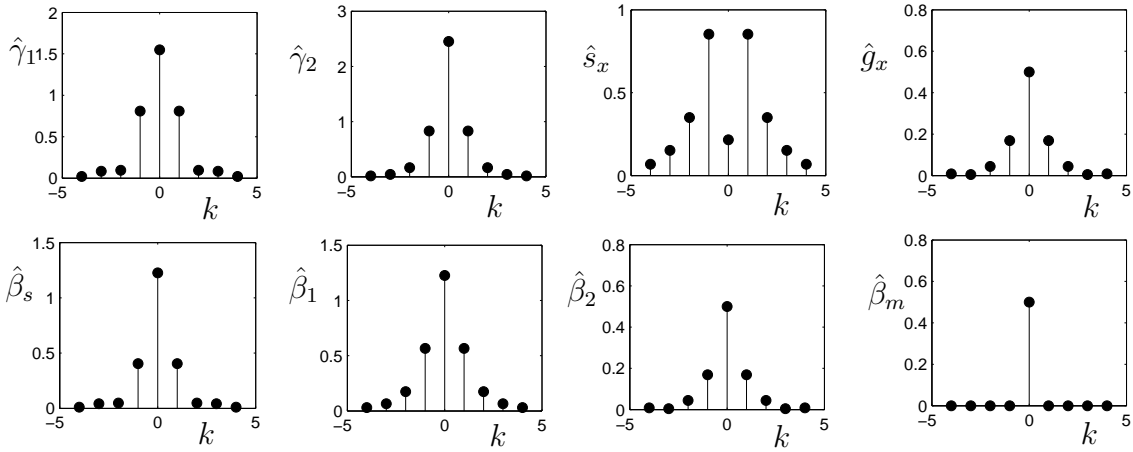}
	\caption{\label{fig:fig2}Absolute value of the first nine Fourier coefficients ($|k|\leq 4$) associated to the stable periodic orbit $\gamma(t)$ of \eqref{eq:RM}; the parameters are given in \eqref{eq:RM_para}. The coefficients of the phase space coordinates as well as the generalized elasticity and scale functions are shown.}
\end{figure}

Since \eqref{eq:mod_flow_Fourier} is an infinite set of algebraic equations it may look like we have not considerably simplified the problem of finding scale functions that are consistent with prescribed elasticities. However, the rapid decay of Fourier coefficients provided by Theorem \ref{thm:Fc_decay} allows us to approximate the solution of \eqref{eq:mod_flow_Fourier} by focusing on the first few harmonics with $|k|\leq \kappa\ll \I$. 

\begin{ex}[Example \ref{ex:ex1} continued]
Just for illustration purposes we look a the Fourier coefficients of generalized scale and elasticity functions in an example. Figure \ref{fig:fig2} shows the results for the Rosenzweig-MacArthur model from Section \ref{sec:specific} with $\kappa=4$. We can clearly see that the Fourier coefficients decay very rapidly; it is also interesting to observe that $\hat{s}_x(k)$ is bimodal for logistic growth whereas the other coefficients show a uni-modal distribution for the first few harmonics. For the Rosenzweig-MacArthur model the algebraic relations \eqref{eq:mod_flow_Fourier} on the Fourier coefficients become
\be
\label{eq:mod_flow_Fourier1}
\begin{array}{lcl}
\frac{2\pi i k}{T}\hat{\beta}_s(k)&=& [\hat{\beta}_s\ast(\hat{\beta}_s-\hat{\beta}_1)\ast(\hat{s}_x-\hat{1})](k),\\
\frac{2\pi i k}{T}\hat{\beta}_1(k)&=&[\hat{\beta}_1\ast((\hat{\beta}_s-\hat{\beta}_1)\ast\hat{g}_x+(\hat{\beta}_2-\hat{\beta}_m)-(\hat{\beta}_s-\hat{\beta}_1))](k),\\
\frac{2\pi i k}{T}\hat{\beta}_2(k)&=&[\hat{\beta}_2\ast((\hat{\beta}_s-\hat{\beta}_1)\ast\hat{g}_x)](k).\\
\end{array}
\ee

\begin{figure}[htbp]
\psfrag{D1}{{$\widehat{\beta_s'}$}}
\psfrag{D2}{{$\widehat{\beta_1'}$}}
\psfrag{D3}{{$\widehat{\beta_2'}$}}
\psfrag{ci}{{$k$}}
	\centering
		\includegraphics[width=1\textwidth]{./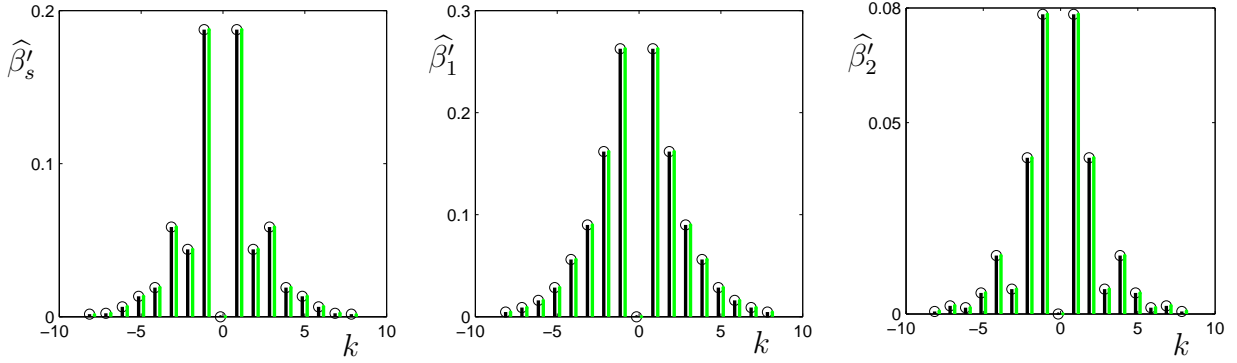}
	\caption{\label{fig:fig3}Absolute value of the first seventeen Fourier coefficients ($|k|\leq 17$) for the left-hand and right-hand sides of the algebraic conditions \eqref{eq:mod_flow_Fourier1}; parameter values used are given in \eqref{eq:RM_para}. The black coefficients (lines shifted slightly left) are the coefficients of the derivatives $\beta_s'$, $\beta_1'$ and $\beta_2'$ and the green coefficients (lines shifted slightly right) are associated to the periodic functions on the right-hand side of \eqref{eq:mod_flow_Fourier1}. The agreement of the two sets of coefficients is clearly visible.}
\end{figure}

Figure \ref{fig:fig3} shows the values of the Fourier coefficients for the Rosenzweig-MacArthur example where we see that the algebraic conditions \eqref{eq:mod_flow_Fourier1} are satisfied as proven in Proposition \ref{prop:FC_sat}. Furthermore, it is evident that due to the convolution a wider support $\kappa_M$ is necessary i.e.{} the algebraic equations \eqref{eq:mod_flow_Fourier} must be satisfied for $|k|\leq \kappa_M$ where $\kappa_M>\kappa$ and $\kappa$ is our truncation for the Fourier coefficients of the phase space periodic orbit.  
\end{ex}

\section{Stability Analysis}
\label{sec:stab1}

To get a better understanding of stability we can also use the Fourier series approach to re-express the Floquet multiplier \eqref{eq:FM1} given by
\be
\label{eq:FM2}
\lambda= \exp\left(\int_0^T \beta_s (s_x-1)-\beta_1 (g_x-1) +\beta_2(g_y-1)-\beta_m(m_y-1)dt\right).
\ee
The next results shows how the different Fourier coefficients enter in formula \eqref{eq:FM2}.

\begin{thm}
\label{thm:Floquet}
For the non-equilibrium generalized predator-prey model with $g_y=1=m_y$ the single Floquet multiplier of a $T$-periodic orbit is given by
\be
\label{eq:FM_Floquet}
\lambda=\exp\left(T\left(\underbrace{[\hat{\beta}_s\ast(\hat{s}_x-\hat{1})](0)}_{=:C_1}-\underbrace{[\hat{\beta}_1\ast(\hat{g}_x-\hat{1})](0)}_{=:C_2}\right)\right)=\exp(T(C_1-C_2))
\ee
i.e. whether $|\lambda|>1$ or $|\lambda|<1$ depends only on the difference of two zeroth-order Fourier coefficients $C_1$ and $C_2$ that arise from two discrete convolutions.
\end{thm}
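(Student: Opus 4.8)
The plan is to start from the Floquet-multiplier formula \eqref{eq:FM2} and read off the exponent as a sum of time averages. Since the hypothesis of the theorem is $g_y=1=m_y$, the two terms $\beta_2(g_y-1)$ and $\beta_m(m_y-1)$ in \eqref{eq:FM2} vanish identically, so the formula reduces to \eqref{eq:FM1} and the exponent collapses to
\benn
\int_0^T \beta_s(t)(s_x(t)-1)-\beta_1(t)(g_x(t)-1)\,dt.
\eenn
It then remains only to evaluate the two integrals $\int_0^T \beta_s(s_x-1)\,dt$ and $\int_0^T \beta_1(g_x-1)\,dt$ in terms of Fourier data.

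The key observation is the elementary identity $\int_0^T f(t)\,dt = T\,\hat f(0)$, valid for any $T$-periodic $f$, which is immediate from the definition of the zeroth Fourier coefficient in Section \ref{sec:Fourier}. Thus each integral equals, up to the factor $T$, the zeroth Fourier coefficient of the corresponding \emph{product} of periodic functions. To compute that coefficient I would invoke the product-convolution formula recalled in the proof of Proposition \ref{prop:FC_sat}, namely that the $n$-th Fourier coefficient of a product $fg$ equals $(\hat f\ast\hat g)(n)$. All functions involved are smooth and $T$-periodic, so Theorem \ref{thm:Fc_decay} guarantees absolute convergence of the relevant series and legitimizes the termwise manipulations.

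Applying this to the first integrand, and using linearity of the Fourier transform together with the convention $\hat 1(0)=1$, $\hat 1(k)=0$ for $k\neq 0$ (so that $\widehat{s_x-1}=\hat s_x-\hat 1$), gives $\int_0^T \beta_s(s_x-1)\,dt = T[\hat\beta_s\ast(\hat s_x-\hat 1)](0)=:TC_1$; the identical argument applied to $\beta_1(g_x-1)$ yields $\int_0^T \beta_1(g_x-1)\,dt = T[\hat\beta_1\ast(\hat g_x-\hat 1)](0)=:TC_2$. Substituting back into the exponent produces $\lambda=\exp(T(C_1-C_2))$, which is precisely \eqref{eq:FM_Floquet}. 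Finally, since $C_1$ and $C_2$ are each the zeroth convolution coefficient of a product of real $T$-periodic functions they are real, so the sign of the real number $T(C_1-C_2)$ alone decides whether $|\lambda|>1$ or $|\lambda|<1$, as claimed.

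There is no serious obstacle here: the argument is essentially bookkeeping built on tools already assembled. The only points demanding any care are confirming $\widehat{s_x-1}=\hat s_x-\hat 1$ under the stated convention for $\hat 1$ and justifying the exchange of summation and integration, which is exactly where the smoothness hypothesis and the coefficient decay of Theorem \ref{thm:Fc_decay} are used.
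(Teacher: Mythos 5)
Your proposal is correct and follows essentially the same route as the paper's proof: both reduce the exponent via $g_y=1=m_y$ to \eqref{eq:FM1} and then extract the zeroth Fourier coefficient of each product using the product--convolution formula, your identity $\int_0^T f\,dt = T\hat{f}(0)$ being just a repackaging of the paper's termwise integration of the Fourier series against $\int_0^T e^{2\pi i k t/T}dt$. Your explicit remark that $C_1$ and $C_2$ are real (so that the sign of $C_1-C_2$ alone decides $|\lambda|\gtrless 1$) is a small point the paper leaves implicit, but it does not change the argument.
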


\begin{proof}
We start by looking at the first summand in integral in \eqref{eq:FM2} which gives
\beann
\int_0^T \beta_s (s_x-1)dt&=&\int_0^T \sum_{k=-\I}^\I [\hat{\beta}_s\ast(\hat{s}_x-\hat{1})](k) ~e^{2\pi ikt/T}dt\\
&=&\sum_{k=-\I}^\I [\hat{\beta}_s\ast(\hat{s}_x-\hat{1})](k) \int_0^T e^{2\pi ikt/T}dt\\
&=&\left\{\begin{array}{ll}0 & \text{ for $k\neq 0$}\\ T ~[\hat{\beta}_s\ast(\hat{s}_x-\hat{1})](0) & \text{ for $k=0$}\\ \end{array}\right.
\eeann
where the last step follows from the fact that $\int_0^Te^{2\pi ikt/T}dt=0$ for $k\neq 0$. From this calculation we find $C_1$ and in a similar way also $C_2$. Using the two factors $C_{1,2}$ and $m_y=1=g_x$ in \eqref{eq:FM2} the result \eqref{eq:FM_Floquet} follows. Since $T>0$ the modulus $|\lambda|$ only depends on the difference of $C_1$ and $C_2$; if $C_1-C_2>0$ then $|\lambda|>1$ and if $C_1-C_2<0$ we obtain $|\lambda|<1$.
\end{proof}

Before we consider in more detail the dependency of stability on $C_1$ and $C_2$ we briefly investigate the influence of the period $T$. Although $T$ does not effect the stability of a periodic orbit directly it does have an interesting biological interpretation. If $T\gg 1$ then the period amplifies stability and instability. For example, when $C_1-C_2>0$ then a long period moves the multiplier even further away from $|\lambda|=1$ and trajectories near the unstable periodic orbit will escape vary quickly. On the other hand, if $C_1-C_2<0$ and $|\lambda|<1$ then a very large period $T$ moves the multiplier even closer to the super-attracting case $0\leq|\lambda|\ll1$. A very short period $0<T\ll 1$ has the effect of moving the multiplier very close to $|\lambda|\approx 1$. This means that when the periodic orbit is unstable, it will take a very long time to escape from it. The last effect can be interpreted as inducing meta-stability {i.e.} when the period of the predator-prey cycle is short then the predator-prey system stays near a metastable state for a long time although it is eventually unstable. This could lead to the conjecture that fast oscillations could be beneficial to survival for predator-prey populations during periods when external  parameters entering $C_1$ and $C_2$ drive the system, potentially only temporarily, to a state when $|\lambda|>1$. 

As a next step, we want to understand better how the Fourier coefficients of $\beta_s$, $\beta_1$, $s_x$ and $g_x$ influence $C_1$ and $C_2$.

\begin{prop}
\label{prop:C1C2}
The two constant $C_{1,2}$ are given by
\bea
C_1&=&\hat{\beta}_s(0)(\hat{s}_x(0)-1)+2\sum_{k=1}^\I (\text{Re}[\hat{\beta}_s(k)]\text{Re}[\hat{s}_x(k)]+\text{Im}[\hat{\beta}_s(k)]\text{Im}[\hat{s}_x(k)]) \label{eq:C1}\\
C_2&=&\hat{\beta}_1(0)(\hat{g}_x(0)-1)+2\sum_{k=1}^\I (\text{Re}[\hat{\beta}_1(k)]\text{Re}[\hat{g}_x(k)]+\text{Im}[\hat{\beta}_1(k)]\text{Im}[\hat{g}_x(k)]) \label{eq:C2}
\eea
\end{prop}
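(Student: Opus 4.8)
The plan is to start directly from the definitions $C_1 = [\hat{\beta}_s\ast(\hat{s}_x-\hat{1})](0)$ and $C_2 = [\hat{\beta}_1\ast(\hat{g}_x-\hat{1})](0)$ supplied by Theorem \ref{thm:Floquet}, and simply evaluate the discrete convolution at the single index $n=0$. Recalling that $[\hat{f}\ast\hat{g}](0) = \sum_{k=-\I}^{\I}\hat{f}(k)\hat{g}(-k)$, we have
\benn
C_1 = \sum_{k=-\I}^\I \hat{\beta}_s(k)\,(\hat{s}_x-\hat{1})(-k).
\eenn
First I would dispose of the $-\hat{1}$ contribution: since $\hat{1}(j)=0$ for $j\neq 0$ and $\hat{1}(0)=1$, the term $-\hat{1}$ only affects the index $k=0$, where it contributes $-\hat{\beta}_s(0)$. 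Hence the $k=0$ term collapses to $\hat{\beta}_s(0)(\hat{s}_x(0)-1)$, which is exactly the first summand in \eqref{eq:C1}, while for $k\neq 0$ the second factor is simply $\hat{s}_x(-k)$.

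Next I would symmetrize the remaining tail $\sum_{k\neq 0}\hat{\beta}_s(k)\hat{s}_x(-k)$ by pairing the index $k$ with $-k$, rewriting it as $\sum_{k=1}^{\I}\big(\hat{\beta}_s(k)\hat{s}_x(-k)+\hat{\beta}_s(-k)\hat{s}_x(k)\big)$. The crucial input here is the reality of the scale and elasticity functions: because $\beta_s$ and $s_x$ are real-valued (they are built from the real functions $S$, $G$, $M$ and the real trajectory $\gamma$), the conjugate-symmetry relation $\hat{f}(-k)=\overline{\hat{f}(k)}$ recorded in Section \ref{sec:Fourier} applies. Substituting $\hat{\beta}_s(-k)=\overline{\hat{\beta}_s(k)}$ and $\hat{s}_x(-k)=\overline{\hat{s}_x(k)}$ turns each paired term into $\hat{\beta}_s(k)\overline{\hat{s}_x(k)}+\overline{\hat{\beta}_s(k)\overline{\hat{s}_x(k)}} = 2\,\text{Re}\big[\hat{\beta}_s(k)\overline{\hat{s}_x(k)}\big]$.

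Finally, writing $\hat{\beta}_s(k)=\text{Re}[\hat{\beta}_s(k)]+i\,\text{Im}[\hat{\beta}_s(k)]$ and $\hat{s}_x(k)=\text{Re}[\hat{s}_x(k)]+i\,\text{Im}[\hat{s}_x(k)]$, a one-line expansion of the product gives $\text{Re}\big[\hat{\beta}_s(k)\overline{\hat{s}_x(k)}\big]=\text{Re}[\hat{\beta}_s(k)]\text{Re}[\hat{s}_x(k)]+\text{Im}[\hat{\beta}_s(k)]\text{Im}[\hat{s}_x(k)]$, which assembles the sum in \eqref{eq:C1}. The identity for $C_2$ follows verbatim with $(\hat{\beta}_s,\hat{s}_x)$ replaced by $(\hat{\beta}_1,\hat{g}_x)$. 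There is no genuine obstacle in this argument; the only point requiring care is the consistent use of the conjugate-symmetry of the Fourier coefficients, which is what lets the two-sided sum over $k\in\Z$ fold into a real one-sided sum over $k\geq 1$ and guarantees (as it must, since $C_{1,2}$ are real by construction) that all imaginary parts cancel.
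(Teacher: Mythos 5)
Your proposal is correct and follows essentially the same route as the paper's proof: evaluating the convolution at index zero, pairing $k$ with $-k$, and invoking conjugate symmetry $\hat{f}(-k)=\overline{\hat{f}(k)}$ of real-valued functions to fold the two-sided sum into $2(\text{Re}\,\text{Re}+\text{Im}\,\text{Im})$ terms. The only cosmetic difference is that you handle the $-\hat{1}$ contribution explicitly at $k=0$ before symmetrizing, whereas the paper states the identity for a general pair of real-valued functions and then applies it; the underlying computation is identical.
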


\begin{proof}
Given two sequences $\{\hat{f}(k)\}_{k=-\I}^\I$ and $\{\hat{g}(k)\}_{k=-\I}^\I$ of Fourier coefficients for two real-valued functions a direct calculation yields
\beann
(\hat{f}\ast \hat{g})(0)&=&\sum_{k=-\I}^\I \hat{f}(k)\hat{g}(-k)\\
&=&\hat{f}(0)\hat{g}(0)+\sum_{k>0} \hat{f}(-k)\hat{g}(k)+\sum_{k>0} \hat{f}(k)\hat{g}(-k)\\
&=&\hat{f}(0)\hat{g}(0)+\sum_{k>0} \overline{\hat{f}(k)}\hat{g}(k)+\sum_{k>0} \hat{f}(k)\overline{\hat{g}(k)}
\eeann
where we have used $\hat{f}(-k)=\overline{\hat{\bar{f}}(k)}$ and the real-valuedness $\bar{f}=f$, $\bar{g}=g$ in the last step. Next, observe that 
\benn 
\overline{\hat{f}(k)}\hat{g}(k)+ \hat{f}(k)\overline{\hat{g}(k)} =2(\text{Re}[\hat{f}(k)]\text{Re}[\hat{g}(k)]+\text{Im}[\hat{f}(k)]\text{Im}[\hat{g}(k)]).
\eenn
Now the formulas \eqref{eq:C1}-\eqref{eq:C2} follow immediately as $\beta_s(t)$, $\beta_1(t)$, $s_x(t)$ and $g_x(t)$ are all real-valued.
\end{proof}

In practice, we never use the infinite sum formulas from Proposition \ref{prop:C1C2} but truncate them at a finite order. Using the explicit formulas for $C_1$ and $C_2$ we can directly draw several conclusions regarding periodic solutions depending on generalized scale and elasticity functions (recall: we still use $g_y=1=m_y$). If all Fourier coefficients of higher-order $k\geq 1$ are small, then stability of periodic solutions is dominated by the terms
\benn
C_1\approx \hat{\beta}_s(0)(\hat{s}_x(0)-1)\qquad  \text{and}\qquad C_2\approx \hat{\beta}_1(0)(\hat{g}_x(0)-1).
\eenn 
Since the scale functions are always positive the time averages of the elasticity functions $\hat{s}_x(0)$ and $\hat{g}_x(0)$ determine the signs of $C_1$ and $C_2$. Therefore, average sub-linear elasticity $\hat{s}_x(0)<1$ and average super-linear conversion $\hat{g}_x(0)>1$ enhance stability. In ecological terms $0\leq \hat{s}_x(0)<1$ means that, on average, the prey growth should be limited by external factors and $\hat{s}_x(0)>1$ means that, on average, the predation rate should be sensitive to prey abundance; see also \cite{GrossFeudel1} for an interpretation of the generalized parameters for the equilibrium case. Both conditions make intuitive sense: if the prey grows without external limitation then solutions may be expected to diverge from a periodic solution and become unbounded while insensitivity of predation to prey growth could potentially drive a system to extinction. Of course, also the inverse relationships hold so that $\hat{s}_x(0)>1$ and $\hat{g}_x(0)<1$ act towards de-stabilization. In this context, the scale functions act as amplifiers. For example, if $C_1<0$ and $C_2>0$ then a large average growth rate $\hat{\beta}_s(0)\gg 1$ and a large conversion rate $\hat{\beta}_1(0)\gg 1$ will enhance stability even more since the Floquet multiplier moves closer to the super-attracting regime $\lambda \approx 0$. In this case, initial conditions will be attracted much quicker to a stable limit cycle. If
\benn
\hat{\beta}_s(0)(\hat{s}_x(0)-1)-\hat{\beta}_1(0)(\hat{g}_x(0)-1)\approx 0 \qquad \text{i.e.} \quad \hat{\beta}_s(0)(\hat{s}_x(0)-1)\approx \hat{\beta}_1(0)(\hat{g}_x(0)-1)
\eenn
the leading-order terms between growth and predation balance and the stability properties are dominated by higher-order harmonics. The leading-order terms also become irrelevant for elasticity functions which average close to one 
\benn
\hat{s}_x(0)\approx 1 \qquad \text{and}\qquad \hat{g}_x(0)\approx 1.
\eenn

\begin{figure}[htbp]
\psfrag{s}{\scriptsize{$\hat{s}_x(1)$}}
\psfrag{s1}{\scriptsize{$\hat{s}_x(2)$}}
\psfrag{b1}{\scriptsize{$\hat{\beta}_s(1)$}}
\psfrag{b11}{\scriptsize{$\hat{\beta}_s(2)$}}
\psfrag{Re}{\scriptsize{Re}}
\psfrag{Im}{\scriptsize{Im}}
\psfrag{a}{\scriptsize{(a)}}
\psfrag{b}{\scriptsize{(b)}}
\psfrag{c}{\scriptsize{(c)}}
	\centering
		\includegraphics[width=1\textwidth]{./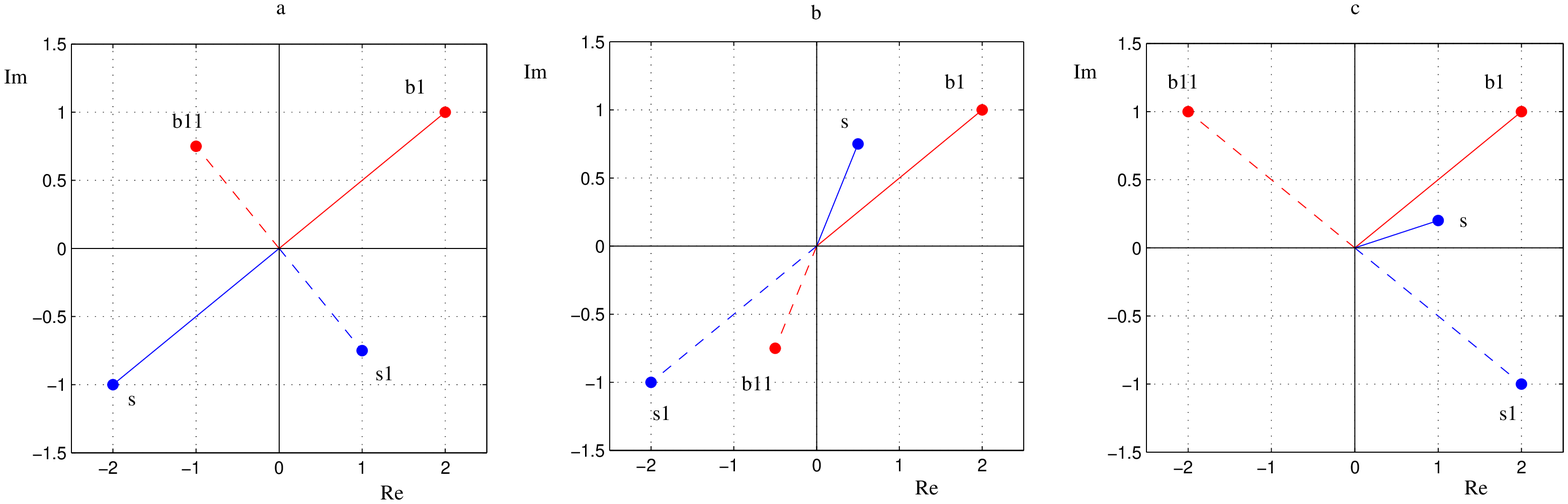}
	\caption{\label{fig:fig4}Illustration how the location of Fourier coefficients for generalized elasticity and scale functions influence stability. Here we focus on the first two higher-order harmonics (first coefficient = solid line, second coefficient = dashed line) of $\beta_s(t)$ (red) and $s_x(t)$ (blue) which influence the term $C_1$ in Theorem \ref{thm:Floquet}. (a) $\pi$-phase shift gives $C_1<0$. (b) Small phase shift gives $C_1>0$. (c) Competition between first- and second-order harmonics.}
\end{figure}

In this scenario we have to focus on the relations between the higher-order Fourier coefficients of $\beta_s$ and $s_x$ as well as $\beta_1$ and $g_x$. Let us assume for simplicity that $\hat{s}_x(0)=1=\hat{g}_x(0)$ so that we can focus on the higher harmonics. Then stability enhancing conditions are
\beann
C_1&=&\sum_{k=1}^\I (\text{Re}[\hat{\beta}_s(k)]\text{Re}[\hat{s}_x(k)]+\text{Im}[\hat{\beta}_s(k)]\text{Im}[\hat{s}_x(k)]) <0,\\
C_2&=&\sum_{k=1}^\I (\text{Re}[\hat{\beta}_1(k)]\text{Re}[\hat{g}_x(k)]+\text{Im}[\hat{\beta}_1(k)]\text{Im}[\hat{g}_x(k)]) >0.
\eeann
Figure \ref{fig:fig4} depicts several different situations in the complex plane for the first two higher-order harmonics of $\hat{\beta}_s(k)$ and $\hat{s}_x(k)$ ($k=1,2$). In Figure \ref{fig:fig4}(a) the first two higher-harmonics are in ``anti-phase'' so that the angles between the coefficients are separated by $\pi$. This means that 
\benn
\text{Re}[\hat{\beta}_s(k)]\text{Re}[\hat{s}_x(k)]<0 \quad \text{and} \quad \text{Im}[\hat{\beta}_s(k)]\text{Im}[\hat{s}_x(k)]<0
\eenn
for $k=1,2$. In such a situation, we expect that $C_1<0$ by disregarding higher orders so that stability is enhanced. 

Figure \ref{fig:fig4}(b) shows the situation where there is only a small phase difference between the coefficients (``in-phase'') which gives 
\benn
\text{Re}[\hat{\beta}_s(k)]\text{Re}[\hat{s}_x(k)]>0 \quad \text{and} \quad \text{Im}[\hat{\beta}_s(k)]\text{Im}[\hat{s}_x(k)]>0.
\eenn
There is also a possible situation where a competition between the different order harmonics arises as illustrated in Figure \ref{fig:fig4}(c). We can now also give an ecological interpretation of these conditions. Stability $C_1<0$ is enhanced if $s_x(t)$ and $\beta_s(t)$ oscillate with a phase separation near $\pi$ which means that a period of high sensitivity of prey abundance should coincide with a period of low prey growth and vice versa. Note that these conditions also make sense intuitively and suggest that prey growth is most efficient if there is a small number of prey and there are no limiting factors from the environment. Similar considerations also apply to the stability enhancing condition $C_2>0$. A small phase separation between $\beta_1(t)$ and $g_x(t)$ increases stability of the predator-prey limit cycle. Observe that $g_x(t)$ can be interpreted as the dependence of predation on prey abundance and $\beta_1(t)$ as a predation rate (normalized by the total number of prey) \cite{GrossFeudel1}; the stability conditions mean that a high predation rate should coincide with a high dependence of predation on prey abundance. In other words, if the dominating factor to prey abundance is predation then it is good for the predator to hunt a lot to increase stability of the limit cycle. 

Note that although the conclusions stated above seem to be ``obvious'' in an ecological context, it is by no means clear how to prove them. That they can be obtained by an analysis of nonlocal generalized models underlines the applicability of the approach.

\section{Sampling}
\label{sec:sampling}

Recall that due to Proposition \ref{prop:local_specific} it was straightforward for equilibrium generalized models to choose a set of generalized parameters, just random sampling produces a set of parameters that is consistent with at least one specific model. Random sampling of generalized parameters has been exploited to correlate different aspects of the dynamical system to stability \cite{ReznikSegre,SteuerGrossSelbigBlasius}. For non-equilibrium systems we must certainly check the necessary condition from Corollary \ref{cor:thm1}. One possibility is the following algorithm which allows sampling of elasticity and scale functions:

\begin{enumerate}
 \item[(A1)] Prescribe a set of $T$-periodic elasticity functions by their Fourier coefficients. For simplicity we will always choose $T=1$ and assume $g_y=1=m_y$.
 \item[(A2)] Choose a truncation order $\kappa_M$ for the algebraic system \eqref{eq:mod_flow_Fourier1} so that the necessary condition reads
\benn
\begin{array}{lclcr}
0&=&-2\pi i k\hat{\beta}_s(k)+ [\hat{\beta}_s\ast(\hat{\beta}_s-\hat{\beta}_1)\ast(\hat{s}_x-\hat{1})](k)&=:&c_s(k),\\
0&=&-2\pi ik \hat{\beta}_1(k)+[\hat{\beta}_1\ast((\hat{\beta}_s-\hat{\beta}_1)\ast\hat{g}_x+(\hat{\beta}_2-\hat{\beta}_m)-(\hat{\beta}_s-\hat{\beta}_1))](k)&=:&c_1(k),\\
0&=&-2\pi i k\hat{\beta}_2(k)+[\hat{\beta}_2\ast((\hat{\beta}_s-\hat{\beta}_1)\ast\hat{g}_x)](k)&=:&c_2(k),\\
\end{array}
\eenn
for $|k|\leq \kappa_M$.
 \item[(A3)] Define a new variable that collects all the Fourier coefficient values for $\beta_s$, $\beta_1$, $\beta_2$, $\beta_m$, $s_x$ and $g_x$
\benn
X:=(\hat{\beta}_s(0),\hat{\beta}_s(1),\ldots,\hat{\beta}_s(\kappa_m),\hat{\beta}_1(0),\ldots,\hat{\beta}_1(\kappa_m),\ldots)\in\C^{6(\kappa_M+1)}\cong \R^{12(\kappa_M+1)}
\eenn
where $x$ contains all the information about the scale and elasticity functions since the negative index coefficients can be obtained by complex conjugation.
\item[(A4)] Define a function
\benn
F(X):=\|\text{Re}(c_s)\|^2+\|\text{Re}(c_1)\|^2+\|\text{Re}(c_2)\|^2+\|\text{Im}(c_s)\|^2+\|\text{Im}(c_1)\|^2+\|\text{Im}(c_2)\|^2
\eenn
where we view $c_s$, $c_1$ and $c_2$ as vectors of dimension $2\kappa_M+1$ and real and imaginary parts are applied component-wise.
\item[(A5)] Observe that $F(X_0)=0$ if and only if the Fourier coefficients encoded in $X_0$ satisfy the algebraic equations in (A2). Therefore we can attempt to solve the following optimization problem
\be
\label{eq:optim}
X_m:=\min\{F(X):X\in \R^{12(\kappa_M+1)}\}
\ee
with a random initial condition, say $x=x_l$.
\end{enumerate}

Solving the optimization problem for different random initial conditions is expected to yield different values for $X_m$ that solve the algebraic constraint in (A2). This means that we get a set of Fourier coefficients $\{X_m(l)\}_{l=1}^L$ where $L$ denotes the sample size and the index $l\in\N$ indicates the dependence on the initial condition.

The main technical difficulty of the algorithm (A1)-(A5) is that it involves the solution of the optimization problem \eqref{eq:optim}. This is computationally much more expensive than the direct random sampling for equilibrium generalized models. It is known \cite{NocedalWright} that the main computational cost in optimization is often given by the difficulty of the function evaluations of $F(x)$. For our case, this seems to be the case since we have to compute several discrete convolutions to evaluate $F(x)$. However, the convolution computation is inexpensive due to the Fast Fourier Transform \cite{Koerner}.\\

\begin{figure}[htbp]
\psfrag{Ns}{$N_s$}
\psfrag{Nu}{$N_u$}
\psfrag{V}{$\quad$}
\psfrag{Rc1}{$\text{Re}(\hat{\beta}_s(1))$}
\psfrag{Rc2}{$\text{Re}(\hat{\beta}_s(2))$}
\psfrag{Ic1}{$\text{Im}(\hat{\beta}_s(1))$}
\psfrag{Ic2}{$\text{Im}(\hat{\beta}_s(2))$}
	\centering
		\includegraphics[width=1\textwidth]{./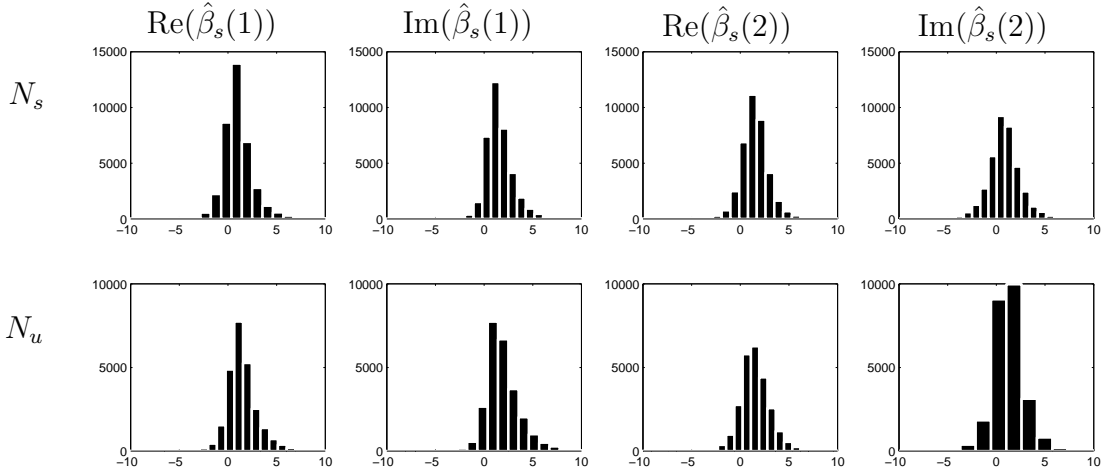}
	\caption{\label{fig:fig5}Histogram of the $5\cdot 63587$ Fourier coefficients for $\beta_s$ obtained from the optimization of \eqref{eq:optim} with uniformly sampled initial conditions \eqref{eq:ic_beta}. The columns show the five different real numbers with their observed number on the vertical axes. The first row shows coefficients associated to a stable Floquet multiplier and the second row those with an unstable Floquet multiplier. Observe that the number of stable coefficients is substantially larger than the number of unstable ones.}
\end{figure}

Now we want to demonstrate that the algorithm can be used for a sampling analysis of stability similar to the one used in \cite{GrossRudolfLevinDieckmann}. Let us point out that we do not attempt a full detailed statistical analysis here but that we only aim at a proof-of-principle. We solved \eqref{eq:optim} for 110000 initial conditions for $\kappa_M=2$ using a standard algorithm for nonlinear optimization \cite{MatLab2010b,LagariasReedsWrightWright}. Each sequence of Fourier coefficients in the initial condition consists of five real numbers e.g. 
\be
\label{eq:ic_beta}
\hat{\beta}_s(0), ~\text{Re}(\hat{\beta}_s(1)), ~\text{Im}(\hat{\beta}_s(1)), ~\text{Re}(\hat{\beta}_s(2)), ~\text{Im}(\hat{\beta}_s(2)),
\ee
which were sampled uniformly and independently from the interval $[0.5,1.5]$. We discarded all solutions of the optimization algorithm that did not satisfy the positivity condition
\benn
\hat{\beta}_s(0)>0,~\hat{\beta}_1(0)>0,~\hat{\beta}_2(0)>0,~\hat{\beta}_m(0)>0. 
\eenn
which is required by the definition of the scale functions and the invariance of the positive quadrant for the moduli space flow. The $63587$ remaining solutions $x_m(l)$ satisfied the the optimization problem (and therefore the moduli space flow) at least up to a tolerance of $10^{-4}$ i.e. $|x_m(l)|<10^{-4}$ for all $l$; the average value was $\mathbb{E}[x_m(l)]\approx 1.73\cdot10^{-6}$. We have also calculated the single Floquet multiplier $\lambda_l$ associated to each solution using Proposition \ref{prop:C1C2}.\\ 

\begin{table}[htb]
\begin{center}
\scriptsize{\begin{tabular}{l|c|c|c|c|c}  

$\hat{\beta}_s(k)$ & $\hat{\beta}_s(0)$ & $\text{Re}(\hat{\beta}_s(1))$ & $\text{Im}(\hat{\beta}_s(1))$ & $\text{Re}(\hat{\beta}_s(2))$ & $\text{Im}(\hat{\beta}_s(2))$ \\ 
\hline
mean (stable) & $1.6021$ & $0.2207$ & $0.5035$ & $-0.0117$ & $-0.1205$ \\
variance (stable) & $0.8840$ & $0.3226$ & $0.4000$ & $0.1000$ & $0.1202$ \\
mean (unstable) & $1.4409$ & $0.1936$ & $0.3863$ & $-0.0232$ & $-0.0591$ \\
variance (unstable) & $0.9129$ & $0.2863$ & $0.3409$ & $0.1014$ & $0.1381$ \\
\hline
& & & & &\\
$\hat{\beta}_1(k)$ & $\hat{\beta}_1(0)$ & $\text{Re}(\hat{\beta}_1(1))$ & $\text{Im}(\hat{\beta}_1(1))$ & $\text{Re}(\hat{\beta}_1(2))$ & $\text{Im}(\hat{\beta}_1(2))$ \\ 
\hline
mean (stable) & $1.3182$ & $0.3275$ & $0.3219$ & $0.0157$ & $0.2875$ \\
variance (stable) & $0.6575$ & $0.3050$ & $0.2431$ & $0.1502$ & $0.1226$ \\
mean (unstable) & $1.4502$ & $0.1778$ & $0.3445$ & $-0.0686$ & $0.2649$ \\
variance (unstable) & $0.7578$ & $0.3213$ & $0.2796$ & $0.1679$ & $0.1344$ \\
\hline
& & & & &\\
$\hat{\beta}_2(k)$ & $\hat{\beta}_2(0)$ & $\text{Re}(\hat{\beta}_2(1))$ & $\text{Im}(\hat{\beta}_2(1))$ & $\text{Re}(\hat{\beta}_2(2))$ & $\text{Im}(\hat{\beta}_2(2))$ \\ 
\hline
mean (stable) & $2.0805$ & $0.4823$ & $0.5761$ & $-0.0583$ & $0.2154$ \\
variance (stable) & $1.8865$ & $0.6716$ & $0.7486$ & $0.2135$ & $0.2970$ \\
mean (unstable) & $1.8914$ & $0.3399$ & $0.3835$ & $-0.0375$ & $0.0502$ \\
variance (unstable) & $1.8228$ & $0.5437$ & $0.5802$ & $0.1593$ & $0.2070$ \\
\hline
& & & & &\\
$\hat{\beta}_m(k)$ & $\hat{\beta}_m(0)$ & $\text{Re}(\hat{\beta}_m(1))$ & $\text{Im}(\hat{\beta}_m(1))$ & $\text{Re}(\hat{\beta}_m(2))$ & $\text{Im}(\hat{\beta}_m(2))$ \\ 
\hline
mean (stable) & $1.6184$ & $0.9865$ & $0.4756$ & $1.7548$ & $0.4965$\\
variance (stable) & $1.3568$ & $22090$ & $2.3338$ & $2.9748$ & $3.1220$ \\
mean (unstable) & $1.7736$ & $1.0465$ & $0.6171$ & $1.7785$ & $0.7635$ \\
variance (unstable) & $1.5189$ & $2.2642$ & $2.7503$ & $2.9247$ & $3.4595$ \\
\hline
& & & & &\\
$\hat{s}_x(k)$ & $\hat{s}_x(0)$ & $\text{Re}(\hat{s}_x(1))$ & $\text{Im}(\hat{s}_x(1))$ & $\text{Re}(\hat{s}_x(2))$ & $\text{Im}(\hat{s}_x(2))$ \\ 
\hline
mean (stable) & $1.5988$ & $1.0598$ & $1.6099$ & $1.5222$ & $0.8393$ \\
variance (stable) & $2.2079$ & $2.8519$ & $2.2995$ & $2.4361$ & $3.1797$ \\
mean (unstable) & $2.5967$ & $1.5343$ & $1.9850$ & $1.5559$ & $1.2697$ \\
variance (unstable) & $3.7412$ & $3.0413$ & $3.5637$ & $2.7521$ & $2.9364$ \\
\hline
& & & & &\\
$\hat{g}_x(k)$ & $\hat{g}_x(0)$ & $\text{Re}(\hat{g}_x(1))$ & $\text{Im}(\hat{g}_x(1))$ & $\text{Re}(\hat{g}_x(2))$ & $\text{Im}(\hat{g}_x(2))$ \\ 
\hline
mean (stable) & $2.7354$ & $1.9722$ & $2.4554$ & $0.9165$ & $1.3490$ \\
variance (stable) & $4.3300$ & $3.6009$ & $3.4094$ & $2.9125$ & $2.6612$ \\
mean (unstable) & $1.4787$ & $1.6774$ & $1.8789$ & $1.2302$ & $1.1109$ \\
variance (unstable) & $2.3056$ & $3.4147$ & $3.4502$ & $3.5286$ & $3.1522$ \\
\hline
\end{tabular}}
\caption{\label{tab:tab1}Mean and variance for the Fourier coefficients obtained from optimization (solution of the moduli space flow). Coefficients associated to stable and unstable Floquet multipliers are considered separately.}
\end{center}
\end{table}

Figure \ref{fig:fig5} shows some of the output of the computation. We plot the Fourier coefficients associated to the scale function $\beta_s$. The top row in Figure \ref{fig:fig5} corresponds to coefficients with stable periodic orbit ($|\lambda|<1$) and the bottom row to coefficients with an unstable periodic orbit ($|\lambda|>1$). We see that, despite the initial uniform sampling, the results for each coefficient of $\beta_s$ closely resemble normal distributions. The same observation also applies for the other scale and elasticity functions. In total we find that $37873$ solutions associated to a stable multiplier and $25714$ unstable ones. From this discrepancy one may either conjecture that the moduli space flow constraint could bias ecosystem towards stability or that our choice of initial uniform random sampling over a particular region in parameter space causes the bias towards stability. 

In Table \ref{tab:tab1} we list mean and variance for each coefficient. Several observations can be made based on Table \ref{tab:tab1}. The scale functions $\beta_2$ and $\beta_m$ have a much bigger variance than $\beta_s$ and $\beta_1$. This could indicate that the prey growth rate and the prey-per-capita predation rate have to obey much smaller ranges in ecosystems compared to the predator-per-capita rates describing consumption and mortality. It is also interesting that the mortality rate $\beta_m$ allows for much larger amplitude higher-order harmonics whereas {e.g.} $|\hat{\beta}_s(2)|$ is always comparatively small. The elasticities show no consistent variance decay towards higher-order harmonics although the coefficients themselves seem to decay. From the ecological perspective this suggest that predator-prey systems may exhibit a wide diversity in terms of sensitivities $s_x$ and $g_x$. 

To understand how the different coefficients relate to stability we calculate the Pearson correlation coefficient. For two vectors of observations $\{a_l\}$ and $\{b_l\}$ it is defined as
\benn
r(a,b):=\frac{\sum_l (a_l-\E[a])(b_l-\E[b])}{\sqrt{\sum_l (a_l-\E[a])^2\sum_l (b_l-\E[b])^2}}.
\eenn    

\begin{figure}[htbp]
\psfrag{bs0}{\scriptsize{$\hat{\beta}_s(0)$}}
\psfrag{bs1}{\scriptsize{$\hat{\beta}_s(1)$}}
\psfrag{bs2}{\scriptsize{$\hat{\beta}_s(2)$}}
\psfrag{bm0}{\scriptsize{$\hat{\beta}_m(0)$}}
\psfrag{bm1}{\scriptsize{$\hat{\beta}_m(1)$}}
\psfrag{bm2}{\scriptsize{$\hat{\beta}_m(2)$}}
\psfrag{b10}{\scriptsize{$\hat{\beta}_1(0)$}}
\psfrag{b11}{\scriptsize{$\hat{\beta}_1(1)$}}
\psfrag{b12}{\scriptsize{$\hat{\beta}_1(2)$}}
\psfrag{b20}{\scriptsize{$\hat{\beta}_2(0)$}}
\psfrag{b21}{\scriptsize{$\hat{\beta}_2(1)$}}
\psfrag{b22}{\scriptsize{$\hat{\beta}_2(2)$}}
\psfrag{gx0}{\scriptsize{$\hat{g}_x(0)$}}
\psfrag{gx1}{\scriptsize{$\hat{g}_x(1)$}}
\psfrag{gx2}{\scriptsize{$\hat{g}_x(2)$}}
\psfrag{sx0}{\scriptsize{$\hat{s}_x(0)$}}
\psfrag{sx1}{\scriptsize{$\hat{s}_x(1)$}}
\psfrag{sx2}{\scriptsize{$\hat{s}_x(2)$}}
\psfrag{r}{\scriptsize{$r$}}
\psfrag{bs1r}{\scriptsize{$\text{Re}\hat{\beta}_s(1)$}}
\psfrag{bs1i}{\scriptsize{$\text{Im}\hat{\beta}_s(1)$}}
\psfrag{bs2r}{\scriptsize{$\text{Re}\hat{\beta}_s(2)$}}
\psfrag{bs2i}{\scriptsize{$\text{Im}\hat{\beta}_s(2)$}}
	\centering
		\includegraphics[width=1\textwidth]{./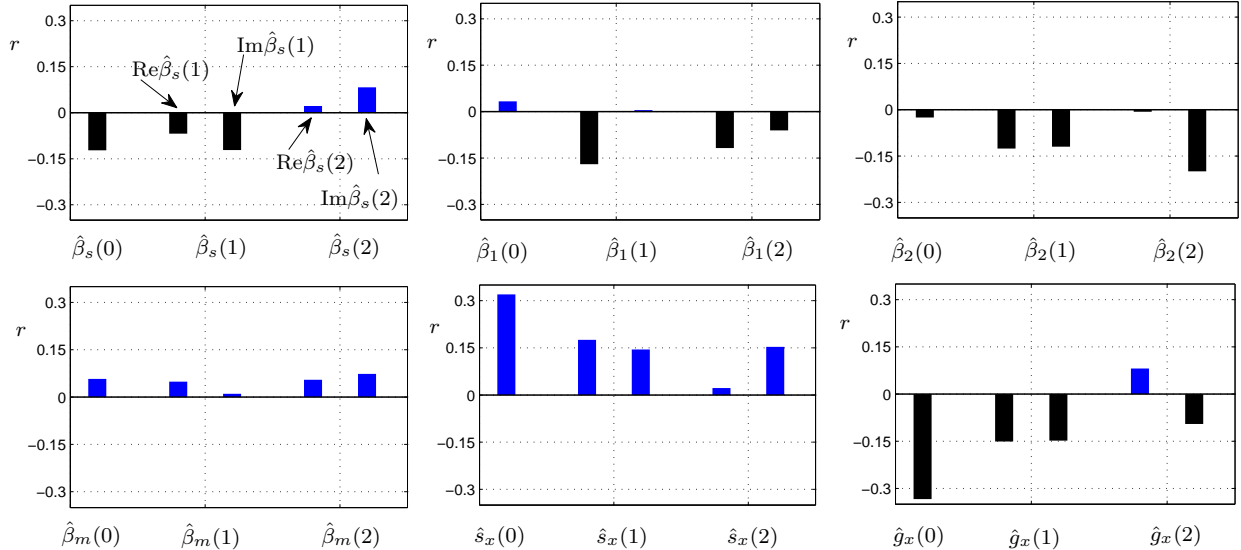}
	\caption{\label{fig:fig6}Pearson correlation coefficient $r=r(a,b)$ between stability and the different harmonics; positive correlation is indicated in blue and negative correlation in black. Each panel represents correlation for the Fourier coefficients from left to right. For example, the top left panel shows the values $r(\lambda_l,\hat{\beta}_{s,l}(0))$, $r(\lambda_l,\text{Re}(\hat{\beta}_{s,l}(1)))$, $r(\lambda_l,\text{Im}(\hat{\beta}_{s,l}(1)))$, $r(\lambda_l,\text{Re}(\hat{\beta}_{s,l}(2)))$, $r(\lambda_l,\text{Im}(\hat{\beta}_{s,l}(2)))$ from left to right where $\lambda_l$ is the Floquet multiplier with index $l$. }
\end{figure}

Figure \ref{fig:fig6} shows $r(a,\lambda_l)$ where $a$ is a sequence of real or imaginary parts of the Fourier coefficients e.g. $\{a_l\}=\{\text{Re}(\hat{\beta}_{s,l}(k))\}$. One important conclusion to draw from the correlation coefficients is that although a Fourier coefficient does not appear in the stability formula for the Floquet multiplier it may still correlate positively or negatively with stability. For example, $\hat{\beta}_2(1)$ and $\hat{\beta}_2(2)$ show a negative correlation with Floquet multiplier. This effect can be caused by the fact that the scale and elasticity functions are not independent {i.e.} they are related via the moduli space flow. 

It is very important to observe that we can recover conclusions, which we found already analytically in Section \ref{sec:stab1}, from the statistical analysis. For example, the coefficient $\hat{g}_x(0)$ correlates negatively with stability which means that decreasing it increases the Floquet multiplier and acts towards destabilization. This is precisely the result we have already obtained analytically in Section \ref{sec:stab1}. Let us point out again that the basic statistical analysis we have provided is incomplete but that it definitely does show that the proposed sampling techniques based on the FFT, optimization and correlations can help to understand stability of periodic solutions.

\section{Outlook}
\label{sec:extensions}

In this paper we have extended the method of generalized modeling from equilibrium to non-equilibrium systems. This extension has been achieved in the context of a classical predator-prey system with periodic solutions. The main re-scalings and definitions from the equilibrium case can be carried over to periodic orbits. However, the resulting generalized ODEs differ from the steady state case in several respects. The algebraic form is different due to the time dependent re-scalings and also the generalized parameters become time-dependent elasticity and scale functions. The Jacobian $A(t)$ of the system has to be analyzed using Floquet theory that describes the stability of periodic orbits. For planar vector fields we have been able to use Liouville's formula 
\benn
\lambda=\exp\left(\int_0^T Tr(A(t))dt\right)
\eenn 
which facilitated several analytical calculations. We have discovered that the generalized elasticity and scale functions have to satisfy a flow moduli space. Then we used Fourier analysis to find computable conditions from the moduli flow. Discrete convolutions turned out to be the key to stability analysis providing explicit interpretable stability results. In the last part of the paper, we suggested a sampling algorithm that uses optimization methods to find elasticity and scale functions that satisfy the (algebraic) moduli space flow. During our analysis we have also obtained several ecological conclusions about arbitrary predator-prey models that can be written in the generalized form \eqref{eq:gm_local}. 

In principle, we can extend the theory described here without any technical problems to limit cycles in $N$-dimensional systems for $N>2$; see also \cite{KuehnSiegmundGross} regarding generalizations to $\R^N$ in the equilibrium context. The main difference in $\R^N$ will be that we have to compute several the Floquet multipliers numerically since Liouville's formula only provides the product of the eigenvalues. 

One can also consider a generalization to non-equilibrium system beyond periodic orbits. For example, the generalized model \eqref{eq:GF2}, as well as Theorem \ref{thm:mod_flow} on the moduli space flow, carry over directly to other situations such as homoclinic trajectories \cite{Kuznetsov} or chaotic dynamics \cite{GH}. For instance, instead of posing periodic boundary conditions of the form 
\benn
\beta_1(0)=\beta_1(T), \qquad \beta_2(0)=\beta_2(T),\qquad \ldots 
\eenn
we have to impose other conditions on the solution of the moduli space flow. For homoclinic orbits we need the boundary conditions 
\benn
\beta_1(-\I)=\beta_1^*=\beta_1(\I), \qquad \beta_2(-\I)=\beta_2^*=\beta_2(\I), \qquad \ldots 
\eenn
{i.e.} that we have asymptotic limits of the generalized elasticity and scale functions to their value at a saddle-point equilibrium. For chaotic dynamics one must search for aperiodic bounded trajectories in moduli space. Note that this raises interesting mathematical as well as application questions. For example, the moduli space flow may provide new insights when a dynamical system may be chaotic. Finally, also our sampling analysis can obviously be extended. Beyond a more detailed statistical validation, we could consider higher-dimensional food webs \cite{GrossRudolfLevinDieckmann} which leads to a problem in $\R^N$.

\bibliographystyle{plain}
\bibliography{../my_refs}

\end{document}